
\documentclass[preprint,12pt]{elsarticle}
\usepackage{tikz}
\usepackage{amsthm,amsfonts,amssymb,amscd,amsmath,enumerate,verbatim,calc,graphicx,geometry}
\usepackage[all]{xy}
\newtheorem{theorem}{Theorem}[section]

\newtheorem{proposition}[theorem]{Proposition}
\newtheorem{corollary}[theorem]{Corollary}
\theoremstyle{definition}
\theoremstyle{definitions}

\newtheorem{remark}[theorem]{Remark}
\newtheorem{example}[theorem]{Example}
\theoremstyle{notations}

\theoremstyle{remarks}

\newcommand{\ov}{\overline}

\journal{}

\begin{document}

\begin{frontmatter}



\title{Comparison of Topologies on Fundamental Groups with Subgroup Topology Viewpoint }


\author[]{Naghme Shahami }
\ead{naghme.shahami75@gmail.com }
\author[]{Behrooz Mashayekhy\corref{cor1}}
\ead{bmashf@um.ac.ir }

\address{Department of Pure Mathematics, Center of Excellence in Analysis on Algebraic Structures, Ferdowsi University of Mashhad,\\
P.O.Box 1159-91775, Mashhad, Iran.}
\cortext[cor1]{Corresponding author}

\begin{abstract}
In order to make the fundamental group, one of the most well known invariants in algebraic topology, more useful and powerful some researchers have introduced and studied various topologies on the fundamental group from the beginning of the 21st century onwards. In this paper by reviewing these topologies, using the concept of subgroup topology, we are going to compare these topologies in order to present some results on topologized fundamental groups.
   
\end{abstract}

\begin{keyword}
Fundamental group \sep subgroup topology\sep compact-open topology\sep Spanier topology\sep whisker topology\sep lasso topology\sep covering subgroup\sep semicovering\sep generalized covering.

\MSC[2020]{57M05, 55Q05, 57M07, 57M10, 57M12.}

\end{keyword}

\end{frontmatter}



\section{Introduction and Motivation}
Historically, putting a natural topology on the fundamental group comes back to Hurewicz \cite{Hur} in 1935 and Dugundji \cite{D} in 1950. From the beginning of the 21st century some researchers have been shown that there are various useful, interesting and functorial topologies on the fundamental group which make it a powerful tool for studying some  topological properties of spaces (see \cite{A16,A20,B11,B12,B13,B14,B15,CM,F9,F11,J,MPT,PTM17,PTM20,TPM,VZ14}).  Some well-known topologies on the fundamental group $\pi_1(X,x_0)$ are as follows.

1. \textbf{The whisker topology}: Let $(X,x_0)$ be a pointed topological space. Define $\widetilde{X} = \{[\alpha ] |  \alpha :I \rightarrow X , \alpha (0) = x_0\}$ as the set of all path-homotopy classes of paths in $X$ starting at $x_0$. Spanier \cite[p. 82]{Span} introduced a topology on $\widetilde{X}$ by basic open neighbourhoods of $[\alpha]$ of the form $N([\alpha],U) = \{[\alpha \ast \delta] | \delta : I \rightarrow U , \delta (0)=\alpha (1) \}$, where $U$ is an open neighbourhood of $\alpha (1)$ in $X$. This topology on $\widetilde{X}$ is called Brodskiy et al. \cite{Br12} \textit{the whisker topology} due to $N([\alpha],U)$ consists of only homotopy classes that differ from$ [\alpha]$ by a small change or “whisker” at it’s end.
The function $p:\widetilde{X} \to X$ as the endpoint projection $p([\alpha])=\alpha (1)$ is continuous and if $X$ is connected, locally path connected, and semilocally simply connected then $p$ is the universal covering map. Clear $p^{-1}(\{x_0\})=\pi_1(X,x_0)$ and so $\pi_1(X,x_0)$ is a subspace of $\widetilde{X}$. One can consider the subspace topology on $\pi_1(X,x_0)$  inherited from $\widetilde{X}$ which is called \textit{the whisker topology} on the fundamental group denoted by $\pi_1^{\mathrm{wh}}(X, x_0)$  (see \cite{Br12}).

2. \textbf{The compact-open quotient topology}:  Let $(X,x_0)$ be a pointed topological space and $\Omega (X,x_0)$ denote the space of loops in X based at $ x_0$. There exists the usual compact-open topology on $\Omega (X,x_0)$ which is generated by subbasis sets $\langle K,U\rangle=\{\alpha\mid \alpha(K)\subseteq U\}$ for compact $K\subseteq [0,1]$ and open $U\subseteq X$. By considering the surjection map $q:\Omega(X,x_0)\to\pi_1(X,x_0), q(\alpha)=[\alpha]$ one can equips $\pi_1(X,x_0)$  with the quotient topology with respect to the map $q:\Omega(X,x_0)\to\pi_1(X,x_0)$ which id denoted by $\pi_{1}^{qtop}(X,x_0)$. We refer to this topology as the \textit{natural quotient topology} on $\pi_{1}(X,x_0)$ (see \cite{Biss, B11, F9}).

3. \textbf{The lasso topology}: For any topological space $X$, Brodskiy et al. \cite[Section 3]{Br8} introduced the lasso topology on the universal path space $\widetilde{X}$ by the basis $  N(\langle \alpha \rangle , \mathcal{U}, W)$, where $ \alpha $ is a path originated at $ x_0 $,  $W$ is a neighbourhood of the endpoint $\alpha(1)$ and $\mathcal{U}$ is an open cover of $X$. A class $\langle \gamma \rangle \in \widetilde{X}$ belongs  to $ N(\langle \alpha \rangle , \mathcal{U}, W)$ if and only if this class has a representative of the form $\alpha * L * \beta$ where $L$ belongs to $\pi\big(\mathcal{U},\alpha(1)\big)$ and $\beta$ is a based loop in $W$ at $\alpha(1)$. There is a bijection between the fundamental group $\pi_1(X, x_0)$ and the fibre of the base point $p^{-1}(x_0)$, where $p: \widetilde{X}\to X$ is the endpoint projection map. Therefore, the fundamental group $\pi_1(X, x_0)$ as a subspace of the universal path space $\widetilde{X}$ inherits any topology from $\widetilde{X}$. Thus, the collection of sets with the form $ N(\langle \alpha\rangle, \mathcal{U}, W)\cap p^{-1}(x_0)$ is a basis for the lasso topology on $\pi_1(X, x_0)$, which is denoted by $\pi_1^{\mathrm{lasso}} (X, x_0)$ (see \cite[Definition 4.11]{Br12}).

4. \textbf{The $\tau$-topology}:  Brazas in \cite{B13} proved that there exists the finest topology on $\pi_{1}(X,x_{0})$ such that $\pi: \Omega(X,x_{0})\rightarrow \pi_{1}(X,x_{0})$ is continuous and $\pi_{1}(X,x_{0})$ is a topological group. The fundamental group $\pi_{1}(X,x_{0})$ with this topology is denoted by $\pi_1^{\mathrm{\tau}}(X, x_0)$ (see \cite{B13}).

5. \textbf{The Spanier topology}:  The collection $\Sigma$ of subgroups of a group $G$ is called a \textit{neighbourhood family} if for any $H,K \in \Sigma$, there is a subgroup $S \in \Sigma$ such that $S \subseteq H\cap K$. The collection of all left cosets of elements of $\Sigma$ forms a basis for a topology on $G$, which is called the subgroup topology determined by $\Sigma$. As an example, \textit{Spanier subgroup topology} \cite[p. 12]{W} was introduced using the collection of all Spanier subgroups $\pi(\mathcal{U}, x_0)$ of the fundamental group $\pi_1(X, x_0)$ as the neighbourhood family $ \Sigma^{Span} $. Recall that \cite[p. 81]{Span}, the Spanier subgroup determined by an open covering $\mathcal{U}$ of $X$ is the normal subgroup $\pi(\mathcal{U}, x_0)$ of $\pi_1(X, x_0)$ generated by the homotopy class of lollipops $\alpha * \beta * \alpha^{-1}$, where $\beta$ is a loop lying in an element of $U \in \mathcal{U}$ at $\alpha(1)$, and $\alpha$ is any path originated at $x_0$. The fundamental group equipped with the Spanier subgroup topology is denoted by $\pi_1 ^{\mathrm{Span}}(X, x_0)$ (see \cite{A20,W}).

6. \textbf{The path Spanier topology}: Torabi et al. \cite[Section 3]{T} replaced open covers with path open covers of the space $ X $ in the definition of Spanier subgroups and introduced path Spanier subgroups by the same way. Recall that a path open cover $ \mathcal{V} $ of the path component of $ X $ involve $ x_0 $ is the collection of open subsets $\lbrace V_{\alpha} \ \vert \ \alpha\in{P(X,x_{0})}\rbrace$ and the path Spanier subgroup $\widetilde{\pi}(\mathcal{V}, x_{0})$ with respect to the path open cover $ \mathcal{V} $ is the subgroup of $ \pi_1(X, x_0)$ consists of all homotopy classes having representatives of the following type:
$$\prod_{j=1}^{n}\alpha_{j}\beta_{j}\alpha^{-1}_{j},$$
where $\alpha_{j}$'s are arbitrary path starting at $x_{0}$ and each $\beta_{j}$ is a loop inside of the open set $V_{\alpha_{j}}$ for all $j\in{\lbrace1,2,...,n\rbrace}$. Note that the path Spanier subgroup $\widetilde{\pi}(\mathcal{V}, x_{0})$ is not a normal subgroup, in general (see \cite[Example 3.7]{A20}).
If $ \mathcal{U} $ and $ \mathcal{V} $ are two path open covers of a space $ X $, the collection $ \mathcal{W}=\lbrace U_{\alpha} \cap V_{\alpha} \ \vert \ \forall \alpha\in{P(X,x_{0}), U_{\alpha} \in \mathcal{U} \ and \ V_{\alpha} \in \mathcal{V} }\rbrace $ is a refinement of both $ \mathcal{U} $ and $ \mathcal{V} $. Thus, $\widetilde{\pi}(\mathcal{W}, x_{0}) \leq  \widetilde{\pi}(\mathcal{U}, x_{0}) \cap \widetilde{\pi}(\mathcal{V}, x_{0}) $, which shows that the collection of all path Spanier subgroups of the fundamental group forms a neighbourhood family.
For a pointed topological space $ (X,x_0) $, let $ \Sigma^{\mathrm{pSpan}} $ be the collection of all path Spanier subgroups of  $ \pi_1(X, x_0) $. The subgroup topology determined by $\Sigma^{\mathrm{pSpan}} $ is called the \textit{path Spanier topology} which is denoted by $ \pi_1^{\mathrm{pSpan}}(X, x_0)$ (see \cite{A20,T}).

7. \textbf{The generalized covering topology}: Brazas \cite{B15} introduced a generalized covering subgroup for a pointed topological space $ (X,x_0) $. Let $\Sigma^{\mathrm gcov} $ be the collection of all generalized covering subgroups of  $\pi_1(X, x_0)$. The subgroup topology determined by $\Sigma^{\mathrm gcov} $ is called the \textit{generalized covering topology} and denoted by $\pi_1^{\mathrm{gcov}}(X, x_0)$ (see \cite{A16,A20,B15,FZ7}).

8. \textbf{The shape topology}: The first shape homotopy group of a pointed topological space $(X,x_0)$ is the inverse limit
$$ \check{\pi}_1(X,x_0)=\varprojlim (\pi_1(|N(\mathcal{U})|,U_0),p_{\mathcal{U}\mathcal{V}*},\Lambda),  $$
where the inverse system $\pi_1(|N(\mathcal{U})|,U_0),p_{\mathcal{U}\mathcal{V}*},\Lambda)$ of discrete groups is the first pro-homotopy group topologized with the usual inverse limit topology. The \emph{shape topology} on $\pi_1(X, x_0)$ is the initial topology with respect to the first shape homomorphism $\Psi_1: \pi_1(X, x_0)\rightarrow \check{\pi}_1(X,x_0)$.
Let $\pi_1^{sh}(X, x_0)$ denote $\pi_1(X, x_0)$ equipped with the shape topology group. Note that one can consider the shape topology on $\pi_1(X, x_0)$ as a subgroup topology with respect to all kernels of homomorphisms induced by maps $X\rightarrow K$ to simplicial complexes $K$ (see \cite{B13, BF14,  BF24} for more details).

9. \textbf{The thick Spanier topology}:  In order to study the kernel of the first shape homomorphism $\Psi_1$, Brazs and Fabel \cite{BF14} introduced the thick Spanier subgroup of $X$ with respect to an open cover $\mathcal{U}$ of $X$, denoted by $\Pi^{sp}(\mathcal{U}, x_0)$, with a modification in the definition of Spanier subgroups, as a subgroup of $\pi_1(X, x_0)$ generated by the homotopy classes of $\alpha * \beta *\gamma* \alpha^{-1}$, where $\beta$ and $\gamma$ are paths lying in elements of $U_1,U_2 \in \mathcal{U}$, respectively, and $\alpha$ is any path originated at $x_0$. The collection of all thick Spanier subgroups $\Pi^{sp}(\mathcal{U}, x_0)$ of the fundamental group $\pi_1(X, x_0)$ forms a neighbourhood family $ \Sigma^{tSpan} $ (see \cite[Proposition 3.10]{BF14}). The fundamental group equipped with the thick Spanier subgroup topology is denoted by $\pi_1 ^{tSpan}(X, x_0)$. The thick Spanier subgroup of $X$, denoted by ${\Pi }^{sp}(X,x_0)$, is the intersection of all thick Spanier subgroups relative to open covers of $X$ \cite[Definition 3.8]{BF14}. Note that thick Spanier subgroups $\Pi^{sp}(\mathcal{U}, x_0)$ and so $\Pi^{sp}(X, x_0)$ are normal subgroups of $\pi_1(X, x_0)$.

Some researchers have attempted to compare the above topologies as follows.
It is proved in \cite{A20} that the lasso topology on the fundamental group $\pi_1(X, x_0)$ coincides with the Spanier subgroup topology.
Since every Spanier subgroup of the fundamental group $\pi_1(X, x_0)$ is also a path Spanier subgroup, then for any pointed space $ (X, x_0) $ the path Spanier topology on the fundamental group, $\pi_1^{pSpan}(X, x_0)$, is finer than the Spanier topology, $\pi_1^{Span}(X, x_0)$.
Also using  \cite[Proposition 3.16]{B13} the authors of \cite{A20} show that
if $X$ is a locally path connected space, then $\pi_1^{pSpan}(X, x_0)$ is coarser than $\pi_1^{\mathrm{\tau}}(X, x_0)$ (see \cite[Corollary 3.15]{A20}).
By considering the definitions of $\pi_{1}^{qtop}(X,x_0)$ and $\pi_1^{\mathrm{\tau}}(X, x_0)$ it is easy to see that $\pi_{1}^{qtop}(X,x_0)$ is finer than $\pi_1^{\mathrm{\tau}}(X, x_0)$.
Fischer and Zastrow \cite[Lemma 2.1]{FZ7} showed that the whisker topology is finer than the $ qtop $-topology on the universal path space $ \widetilde{X} $ for any space $ X $. Clearly, the result will hold for the fundamental group $\pi_{1}(X,x_{0})$ as a subspace of $ \widetilde{X} $.

By the above statements, one can summarize the relationship between the mentioned topologies on the fundamental group in the following chain 
(note that we use the symbol $\preccurlyeq$ to show the finer topology on a group. For example, $G^{\tau_1}\preccurlyeq  G^{\tau_2}$ means that $\tau_2$ is finer than $\tau_1$ and $G^{\tau_1}\prec  G^{\tau_2}$ means that $\tau_2$ is strictly finer than $\tau_1$). 
\[ \pi_1^{sh}(X, x_0) \preccurlyeq \pi_1^{tSpan}(X,x_0) \preccurlyeq \pi_1^{lasso} (X, x_0)=\pi_1^{Span}(X, x_0) \preccurlyeq \pi_1^{pSpan}(X, x_0)\]
\[ \preccurlyeq \pi_1^{\tau}(X, x_0) \preccurlyeq \pi_1^{qtop}(X, x_0)  \preccurlyeq \pi_1^{wh}(X, x_0).\]

Note that there are some examples to show that most of the above topologies are strictly finer than the previous one. As a good example, consider the {\it Hawaiian Earring} space, $\mathbb{HE}$. Using the results of \cite{A20, B13,F9,J} there exists the following chain of strictly finer topologies on the fundamental group of $\mathbb{HE}$: 
\[\pi_1^{Span}(\mathbb{HE}, 0) \prec \pi_1^{pSpan}(\mathbb{HE}, 0),\] 
\[\pi_1^{sh}(\mathbb{HE}, 0)\prec \pi_1^{\tau}(\mathbb{HE}, 0)\prec \pi_1^{qtop}(\mathbb{HE}, 0)\prec \pi_1^{wh}(\mathbb{HE}, 0).\]

For the generalized covering topology, it is proved in \cite[Proposition 3.24]{A20} that 
\[\pi_1^{\mathrm{qtop}}(X, x_0)  \preccurlyeq \pi_1^{\mathrm{gcov}}(X, x_0),\]
when $X$ is a connected and locally path connected. Note that $\pi_1^{wh}(\mathbb{HE}, 0)  \prec \pi_1^{gcov}(\mathbb{HE}, 0)$ and 
$\pi_1^{gcov}(\mathbb{HA}, b)  \prec \pi_1^{wh}(\mathbb{HA}, b)$, where $\mathbb{HA}$ is the {\it Harmonic Archipelago} and $b\neq 0$ (see \cite{A16,A20,FZ7}). Therefore, the whisker topology and the generalized covering topology can not be comparable, in general.
Moreover, if $X$ is locally path connected, paracompact and Hausdorff space , then by \cite[Theorem 7.6]{BF14} one can see that the equality $\pi_1^{sh}(X, x_0)=\pi_1^{tSpan}(X,x_0)=\pi_1^{Span}(X, x_0)$ holds. 

Let $(X,x_0)$ be a pointed topological space and H be a subgroup from $\pi _1(X,x_0)$, we define $\Sigma^H$ as follows:
\[\Sigma^H= \{K\leqslant \pi_1(X,x_0) \ | \ H\subseteq K\}\]
It is easy to see that $\Sigma^H$ is a neighbourhood family. Now we define $\pi_1^H(X,x_0)$ as subgroup topology on $\pi_1(X,x_0)$ determined by $\Sigma^H$.

In this paper, we are going to study most of the above topologies on $\pi _1(X,x_0)$ in view of some famous subgroups in the following chain of subgroups of the fundamental group ${\pi }_1(X,x_0)$ for a locally path connected space $(X,x_0)$ (see \cite{A16}).
\[\{e\}\leq {\pi }^s(X,x_0)\leq {\pi }^{sg}(X,x_0)\leq \pi ^{gc}(X,x_0)\leq {\widetilde{\pi }}^{sp}(X,x_0)\leq {\pi }^{sp}(X,x_0)\leq p_*\pi_1(\widetilde{X},\widetilde{x}), 
\]
where ${\pi }^s(X,x_0)$ is the subgroup of all small loops at $x_0$ \cite{V}, ${\pi }^{sg}(X,x_0)$ is the subgroup of all small generated loops \cite{V},
$\pi^{gc}(X,x_0)$ is the intersection of generalized covering subgroups \cite{A16}, ${\pi }^{sp}(X,x_0)$ is the Spanier group of $X$, the intersection of the Spanier subgroups relative to open covers of $X$ \cite[Definition 2.3]{FR},  ${\widetilde{\pi }}^{sp}(X,x_0)$ is the path Spanier group, i.e, the intersection of all path Spanier subgroups ${\widetilde{\pi }}(\mathcal{V},x_0)$ where $\mathcal{V}$ is a path open cover of $X$ \cite[Section 3]{T},    and $p_*\pi_1(\widetilde{X},\widetilde{x}_0)\cong \pi_1(\widetilde{X},\widetilde{x}_0) $ is the image of the induced homomorphism of a covering map $p:(\widetilde{X},\widetilde{x}_0)\rightarrow (X,x_0)$.

In Section 2, by reviewing subgroup topology on a group, we concentrate on a special kind of subgroup topology on a group $G$ with respect to a subgroup $H$ induced by the neighbourhood family $\Sigma^H= \{K\leqslant G \ | \ H\subseteq K\}$, denoted by $G^H$. Among presenting some properties of this kind of subgroup topology, we show that $G^H$ is a topological group if and only if $H$ is a normal subgroup of $G$. Moreover,
if $G^{\Sigma} = G^{\Sigma^{\prime}}$, then  $S_{\Sigma} = S_{\Sigma^{\prime}}$, where $S_{\Sigma}$ is the infinitesimal subgroup for the neighbourhood family $\Sigma$. Also the converse holds if $S_{\Sigma} \in \Sigma$ and $S_{\Sigma^{\prime}} \in \Sigma^{\prime}$.

In Section 3, we compare some of the well-known topologies on the fundamental group ${\pi }_1(X,x_0)$ with subgroup topologies with respect to the above famous subgroups of ${\pi }_1(X,x_0)$. Among giving some results, we prove that
if $H=\pi^{gc}(X,x_0)$, then $\pi_1^H(X,x_0)=\pi_1^{gcov}(X,x_0)$. Also, if $H=\pi^{sp}(X,x_0)$, then
$\pi_1^{Span}(X,x_0) \preccurlyeq \pi_1^H(X,x_0)$ and the equality holds if $X$ is coverable. Moreover, if $\pi^{sp}(X,x_0)$ is a semicovering subgroup, then 
$\pi_1^{Span}(X,x_0) \preccurlyeq \pi_1^H(X,x_0) \preccurlyeq \pi_1^{pSpan}(X,x_0).$
Also, $\pi_1^{pSpan}(X,x_0) \preccurlyeq \pi_1^{\widetilde{\pi}^{sp}(X,x_0)}(X,x_0)$ and the equality holds if $\widetilde{\pi}^{sp}(X,x_0)$ is a semicovering subgroup.  As a consequence 
 $\pi_1^{Span}(X,x_0) = \pi_1^{\pi^{sp}(X,x_0)}(X,x_0) = \pi_1^{gcov}(X,x_0)$ if X is  connected, locally path connected and semilocally $\pi^{gc}(X,x_0)$-connected space.  Also, $\pi_1^H(X,x_0) \preccurlyeq \pi_1^{qtop}(X,x_0) \preccurlyeq \pi_1^{wh}(X,x_0)$ if and only if X is semilocally H-connected at $x_0$. Moreover, $\pi_1^{wh}(X,x_0 \preccurlyeq \pi_1^{\pi^s(X,x_0)}(X,x_0)$ and if $X$ is semilocally $\pi^s(X,x_0)$-connected at $x_0$, then 
$\pi_1^{wh}(X,x_0) =\pi_1^{qtop}(X,x_0)= \pi_1^{\pi^s(X,x_0)}(X,x_0).$ At the end of this section, we sum up all results of this section in a diagram in order to compare various topologies on the fundamental group.
 
Finally, in Section 4, we prove that $\pi_1^{qtop}(X,x_0)$ is a topological group if X is semilocally $\pi^{gc}(X,x_0)$-connected or $[\pi_1(X,x_0):\pi^{gc}(X,x_0)]$ is finite. Also, we show that $\pi_1^H(X,x_0)$ is a topological group if and only if $H$ is a normal subgroup of $\pi_1(X,x_0)$. As a consequence, we have the following results.\\
$(i)$ If $\pi_1(X,x_0)$ is a Dedekind group, then $\pi_1^H(X,x_0)$ is a topological group, for every subgroup $H$ of $\pi_1(X,x_0)$.\\
$(ii)$ $\pi_1^{\Pi^{sp}(X,x_0)}(X,x_0)$, $\pi_1^{\pi^{sp}(X,x_0)}(X,x_0)$ and $\pi_1^{\pi^{sg}(X,x_0)}(X,x_0)$ are topological groups.
 

\section{Subgroup Topologies} \label{Sec2}

  A collection $\Sigma$ of subgroups of $G$ is called a \textit{neighbourhood family} if for any $H,K \in \Sigma$, there is a subgroup $S \in \Sigma$ such that $S \subseteq H\cap K$. As a result of this property, the collection of all left cosets of elements of $\Sigma$ forms a basis for a topology on $G$, which is called the subgroup topology determined by $\Sigma$ and we denote it by $G^{\Sigma}$. The \textit{subgroup topology} on a group $G$ specified by a neighbourhood family was defined in \cite[Section 2.5]{BS} and considered by some recent researchers such as \cite{A20,W}.
Since left translation by elements of $G$ determine self-homeomorphisms of $G$, they are homogeneous spaces. 
 Bogley et al. \cite{BS} focused on some general properties of subgroup topologies and by introducing the intersection $S_\Sigma=\cap \{H \ \vert \ H \in \Sigma\}$, called the \textit{infinitesimal} subgroup for the neighbourhood family $\Sigma$. They showed that the closure of the element $g\in G$ is the coset $g S_\Sigma$. 

Let H be a subgroup of a group $G$. Then we define $\Sigma^H$ as follows
\[\Sigma^H= \{K\leqslant G \ | \ H\subseteq K\}.\]
It is easy to see that $\Sigma^H$ is a neighbourhood family. We consider the subgroup topology on $G$ determined by $\Sigma^H$ and denote it by $G^H$. Note that the infinitesimal subgroup for the neighbourhood family $\Sigma^H$ is $H$. The following result can be obtained easily.
\begin{theorem}\label{2.1}
Let $H\leqslant G$ , then $G^H$ is discrete if and only if $H=1$. Also, $G^H$ is indiscrete if and only if $H=G$.
\end{theorem}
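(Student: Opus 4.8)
The plan is to describe the basic open sets of $G^H$ explicitly and then read off both equivalences. By definition the basis of $G^H$ consists of the left cosets $gK$ where $K$ ranges over the subgroups of $G$ with $H\subseteq K$ (that is, $K\in\Sigma^H$). The first thing I would record is that such a coset $gK$ contains the identity $e$ precisely when $g\in K$, and in that case $gK=K$; hence the basic open neighbourhoods of $e$ in $G^H$ are exactly the subgroups $K$ of $G$ with $H\subseteq K$. Since left translation by any element of $G$ is a self-homeomorphism of $G^H$, the space is homogeneous, so it suffices to analyse neighbourhoods of $e$: $G^H$ is discrete iff $\{e\}$ is open, and $G^H$ is indiscrete iff $G$ is the only nonempty open set.

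For the first equivalence, if $H=1$ then $\{e\}$ is itself a subgroup containing $H$, hence a basic open set, so $\{e\}$ is open and $G^H$ is discrete. Conversely, if $G^H$ is discrete then $\{e\}$ is open, so some basic open neighbourhood of $e$ — that is, some subgroup $K$ with $H\subseteq K$ — is contained in $\{e\}$; this forces $K=\{e\}$ and therefore $H=1$.

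For the second equivalence, if $H=G$ then the only subgroup of $G$ containing $H$ is $G$ itself, so $\Sigma^H=\{G\}$, the only basic open set is the coset $G$, and the generated topology is $\{\emptyset,G\}$; thus $G^H$ is indiscrete. Conversely, $H\in\Sigma^H$ always holds, so $H$ (viewed as the coset $eH$) is a nonempty open set of $G^H$; if $G^H$ is indiscrete then the only nonempty open set is $G$, whence $H=G$.

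I do not expect any genuine obstacle here: once the basic neighbourhoods of $e$ are identified with the subgroups in $\Sigma^H$, both directions are one-line arguments. The only point requiring a moment's care is precisely that identification (equivalently, one can avoid appealing to homogeneity and argue pointwise, noting that $\{g\}$ is open iff $gK=\{g\}$ for some $K\in\Sigma^H$, iff $K=1$), but this is entirely elementary.
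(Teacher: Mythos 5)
Your proof is correct, and it is exactly the elementary argument the paper has in mind when it says the result ``can be obtained easily'' (the paper in fact omits the proof entirely): identify the basic open neighbourhoods of $e$ with the subgroups $K\supseteq H$, use homogeneity, and note that $H$ itself is always open while $\{e\}$ is open precisely when $1\in\Sigma^H$. No gaps.
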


Using \cite[Theorem 2.9]{BS} and the above theorem we have the following result.
\begin{corollary}\label{2.2}
Let $H$ be a subgroup of $G$, then the following statements are equivalent.\\
$(i)$ $G^H$ is Hausdorff.
$(ii)$ $G^H$ is $T_0$.
$(iii)$ $G^H$ is totally disconnected.
$(iv)$ $G^H$ is discrete.
$(v)$ $H$ is the trivial subgroup.
\end{corollary}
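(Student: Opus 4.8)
The proof is a short deduction from Theorem \ref{2.1} together with the general structure result \cite[Theorem 2.9]{BS} for subgroup topologies. The first step is to record that the infinitesimal subgroup of the neighbourhood family $\Sigma^H$ is exactly $H$: indeed $S_{\Sigma^H}=\bigcap\{K\leqslant G \mid H\subseteq K\}=H$, since $H$ itself belongs to $\Sigma^H$. Hence, by the fact recalled in the text that the closure of $g\in G$ in $G^{\Sigma}$ is the coset $gS_{\Sigma}$, the closure of the identity in $G^H$ is precisely $H$.

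Next I would invoke \cite[Theorem 2.9]{BS}, which for an arbitrary neighbourhood family $\Sigma$ on $G$ characterizes when $G^{\Sigma}$ is Hausdorff, $T_0$, or totally disconnected, each being equivalent to the triviality of $S_{\Sigma}$. Applying this with $\Sigma = \Sigma^H$ and substituting $S_{\Sigma^H}=H$ immediately gives the equivalences $(i)\Leftrightarrow(ii)\Leftrightarrow(iii)\Leftrightarrow(v)$.

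It then remains to fold in $(iv)$. The implication $(iv)\Rightarrow(i)$ is trivial, since every discrete space is Hausdorff. Conversely, $(v)\Rightarrow(iv)$ is exactly the first assertion of Theorem \ref{2.1}: $G^H$ is discrete precisely when $H=1$. This closes the cycle of implications and finishes the argument.

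As for difficulty, there is essentially no obstacle internal to this deduction once \cite[Theorem 2.9]{BS} is in hand; the only points needing care are the (routine) identification $S_{\Sigma^H}=H$ and keeping track of which implications are formal (the ``discrete $\Rightarrow$ everything else'' direction) as opposed to those that genuinely use the cited theorem and Theorem \ref{2.1}. If one preferred a self-contained argument, the substantive step would be to observe that $G^H$ is homogeneous, so $T_0$ is equivalent to $T_1$, whence $\overline{\{e\}}=H$ must collapse to $\{e\}$; and that total disconnectedness forces $H=1$ as well, since the connected component of $e$ contains the connected set $\overline{\{e\}}=H$. Since the corollary is phrased as a consequence of \cite[Theorem 2.9]{BS}, however, invoking that result directly is the intended and cleanest route.
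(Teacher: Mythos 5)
Your proposal matches the paper's own (very brief) justification: the paper simply cites \cite[Theorem 2.9]{BS} together with Theorem \ref{2.1}, exactly as you do, after noting that the infinitesimal subgroup of $\Sigma^H$ is $H$. Your optional self-contained sketch is also sound (the connectedness of $\overline{\{e\}}=H$ follows since $H$ carries the indiscrete subspace topology in $G^H$), but the citation route is the intended one.
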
 

 It is pointed out in \cite{BS} that although the group $ G $ equipped with a subgroup topology may not necessarily a topological group, in general (it may not even a quasitopological group), because right translation maps by a fixed element of $G$ need not be continuous, but it has some of properties of topological groups  (for more details see Theorem 2.9 in \cite{BS}). Wilkins \cite[Lemma 5.4]{W} showed that a group G with the subgroup topology determined by a neighbourhood family $\Sigma$ is a topological group when all subgroups in $\Sigma$ are normal.
 Moreover, it is proved in \cite[Corollary 2.2]{A20} that a group equipped with a subgroup topology is a topological group if and only if all right translation maps are continuous.
A \textit{Dedekind group} is a group G such that every subgroup of G is normal. Clearly all abelian groups are Dedekind groups. A non-abelian Dedekind group is called a Hamiltonian group. It is proved that every Hamiltonian group is a direct product of the form $Q_8\times B\times D$, where $Q_8$ is the quaternion group,  $B$ is an elementary abelian $2$-group, and $D$ is a torsion abelian group with all elements of odd order (see \cite[p.190]{Hal}). Using these facts we have the following result. Let $H$ be a normal subgroup of $G$ such that the quotient group $G/H$ is a Dedekine group. Then $G^H$ is a topoplogical group.

 In the following theorem we vastly extend the above result.

\begin{theorem}\label{2.3}
Let $G$ be a group and $\Sigma$ be a neighbourhood family on $G$ such that $S_{\Sigma}\in \Sigma $. Then $G^{\Sigma}$ is a topological group if and only if $S_{\Sigma}$ is a normal subgroup of $G$. In particular, $G^H$ is a topological group if and only if $H$ is a normal subgroup of $G$.
\end{theorem}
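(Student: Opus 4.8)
The plan is to reduce everything to the single subgroup $S_\Sigma$. The hypothesis $S_\Sigma\in\Sigma$ forces $S_\Sigma$ to be the smallest open neighbourhood of the identity in $G^\Sigma$, so I would first verify that $G^\Sigma=G^{\{S_\Sigma\}}$, where $\{S_\Sigma\}$ is the (trivially) neighbourhood family consisting of $S_\Sigma$ alone. On one hand, since $S_\Sigma\in\Sigma$, every coset $gS_\Sigma$ is a basic open set of $G^\Sigma$; on the other hand, since $S_\Sigma\subseteq H$ for every $H\in\Sigma$, each basic open set $gH=\bigcup_{h\in H}ghS_\Sigma$ is a union of cosets of $S_\Sigma$, hence open in $G^{\{S_\Sigma\}}$. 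So the two topologies agree, and in either description the minimal open set containing $e$ is $S_\Sigma$.

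For the ``if'' direction I would assume $S_\Sigma$ is normal in $G$. Then $\{S_\Sigma\}$ is a neighbourhood family all of whose members are normal, so Wilkins' result \cite[Lemma 5.4]{W} (equivalently \cite[Corollary 2.2]{A20}) shows that $G$ equipped with the subgroup topology determined by $\{S_\Sigma\}$ is a topological group; by the reduction of the first paragraph this topology is exactly $G^\Sigma$. Note that it is not enough that $\Sigma$ contain a normal cofinal member in an arbitrary way: $\Sigma$ itself may still contain non-normal subgroups, which is why one genuinely passes to $\{S_\Sigma\}$, using $S_\Sigma\in\Sigma$.

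For the ``only if'' direction I would assume $G^\Sigma$ is a topological group and recall that in any topological group the closure $\overline{\{e\}}$ of the identity is a normal subgroup: it is a subgroup by continuity of multiplication and inversion, and it is normal because each conjugation $x\mapsto gxg^{-1}$ is a homeomorphism fixing $e$ and hence carries $\overline{\{e\}}$ onto itself. By the fact of Bogley et al. \cite{BS} recalled above, the closure of $e$ in the subgroup topology $G^\Sigma$ is the coset $eS_\Sigma=S_\Sigma$. Therefore $S_\Sigma=\overline{\{e\}}$ is normal in $G$. (Equivalently, without invoking closures: conjugation by $g$ is a self-homeomorphism of $G^\Sigma$ fixing $e$, so $gS_\Sigma g^{-1}$ is an open neighbourhood of $e$ and hence contains the minimal one, giving $S_\Sigma\subseteq gS_\Sigma g^{-1}$ for every $g$, which forces equality.) The ``in particular'' assertion is then the special case $\Sigma=\Sigma^H$, since $S_{\Sigma^H}=\bigcap\{K\leqslant G\mid H\subseteq K\}=H\in\Sigma^H$, so the standing hypothesis $S_\Sigma\in\Sigma$ is automatic and $G^H$ is a topological group precisely when $H$ is normal.

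I expect the only delicate point to be the ``only if'' direction: a group carrying a subgroup topology is \emph{a priori} homogeneous only under left translations, so normality of $S_\Sigma$ cannot be extracted from homogeneity alone — one must use the full topological group hypothesis (continuity of conjugations, equivalently of right translations) together with the hypothesis $S_\Sigma\in\Sigma$, which is what makes $S_\Sigma$ the minimal identity neighbourhood and thereby makes the conjugation argument close.
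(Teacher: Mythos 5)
Your proof is correct and takes essentially the same route as the paper: both reduce the topology to the basis of left cosets of $S_{\Sigma}$ (using $S_{\Sigma}\in\Sigma$), obtain the ``if'' direction from continuity of right translations for a normal cofinal subgroup (the paper verifies $r_x^{-1}(Hg)=Hgx^{-1}$ by hand and cites \cite{A20}, where you invoke Wilkins \cite{W}), and extract normality in the ``only if'' direction from the minimality of $S_{\Sigma}$ as a neighbourhood of the identity. Your parenthetical conjugation argument is, up to composing with a (always continuous) left translation, exactly the paper's observation that the open set $r_x^{-1}(S_{\Sigma})=S_{\Sigma}x^{-1}$ must contain the basic coset $x^{-1}S_{\Sigma}$.
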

\begin{proof}
Put $H=S_{\Sigma}$. Since $H\in \Sigma $, every subgroup $K$ in $\Sigma$ is a union of left cosets of $H$ and so $\{gH|g\in G\}$ is a basis for the subgroup topology of 
$G^{\Sigma}$. Suppose that $H$ is a normal subgroup of $G$, then $gH=Hg$ for all $g\in G$. Therefore any right translation map $r_x:G\rightarrow G$ is continuous since $r_x^{-1}(Hg)=Hgx^{-1}$. Hence by \cite[Proposition 2.1]{A20} $G^{\Sigma}$ is a topological group. 

Conversely, let $G^{\Sigma}$ be a topological group. Then any right translation map $r_x:G\rightarrow G$ is continuous. Therefore $Hx^{-1}=r_x^{-1}(H)$ is open in $G^{\Sigma}$. Since $\{gH|g\in G\}$ is a basis for the subgroup topology of $G^{\Sigma}$, there exists a left coset $gH$ such that $x^{-1}\in gH\subseteq Hx^{-1}$. Thus 
$x^{-1}H= gH\subseteq Hx^{-1}$ and so $x^{-1}Hx\subseteq H$. Hence $H$ is a normal subgroup of $G$.
\end{proof}
 
In the following we intend to compare two subgroup topologies.
\begin{theorem}\label{2.4}
Let $G$ be a group and $\Sigma$ and $\Sigma^{\prime}$ are two neighbourhood family on $G$ such that $G^{\Sigma} = G^{\Sigma^{\prime}}$. Then $S_{\Sigma} = S_{\Sigma^{\prime}}$ 
\end{theorem}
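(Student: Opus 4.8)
The plan is to use the invariant description of the infinitesimal subgroup recorded above from Bogley et al.\ \cite{BS}: in any subgroup topology $G^{\Sigma}$ the closure of a point $g$ is the coset $gS_{\Sigma}$, and in particular $\overline{\{e\}} = S_{\Sigma}$. Since the closure operator of a space is determined by its topology, the hypothesis $G^{\Sigma} = G^{\Sigma^{\prime}}$ (equality of topologies on the underlying set $G$) will immediately force $S_{\Sigma} = S_{\Sigma^{\prime}}$.

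Concretely, first I would recall why $\overline{\{e\}} = S_{\Sigma}$. A basic open set is a left coset $gH$ with $H \in \Sigma$, and $e \in gH$ forces $g \in H$, hence $gH = H$; thus the basic open sets containing $e$ are exactly the members of $\Sigma$. Consequently a point $x$ fails to lie in $\overline{\{e\}}$ precisely when some $H \in \Sigma$ satisfies $x \notin H$ (here using $e \in xH \iff x \in H$ for a subgroup $H$), so $\overline{\{e\}} = \bigcap_{H \in \Sigma} H = S_{\Sigma}$. Running the identical computation for $\Sigma^{\prime}$ gives $\overline{\{e\}} = S_{\Sigma^{\prime}}$ in the topology $G^{\Sigma^{\prime}}$. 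Then from $G^{\Sigma} = G^{\Sigma^{\prime}}$ the two spaces have the same closure operator, so the closure of $\{e\}$ is the same subset of $G$ computed in either; therefore $S_{\Sigma} = \overline{\{e\}} = S_{\Sigma^{\prime}}$.

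There is essentially no obstacle here: the only point needing a line of justification is the identification $\overline{\{e\}} = S_{\Sigma}$, and that is already granted by the cited result of \cite{BS}, after which the statement is just the remark that the closure of a point is a topological invariant. One could equivalently avoid closures altogether by noting that $S_{\Sigma}$ is the intersection of all open neighbourhoods of the identity in $G^{\Sigma}$ — each neighbourhood of $e$ contains a member of $\Sigma$, and each member of $\Sigma$ is itself such a neighbourhood — which is manifestly determined by the topology; this yields an equally short alternative route. (Note that the converse is false in general, which is why the follow-up result in the excerpt needs the extra hypothesis $S_{\Sigma} \in \Sigma$ and $S_{\Sigma^{\prime}} \in \Sigma^{\prime}$.)
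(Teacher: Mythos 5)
Your proof is correct. The identification $\overline{\{e\}} = S_{\Sigma}$ is valid: a basic open set $gH$ containing a point $x$ is $xH$, and $e \in xH$ iff $x \in H$, so $\overline{\{e\}} = \bigcap_{H\in\Sigma} H = S_{\Sigma}$; since equal topologies have equal closure operators, the conclusion follows. The paper argues slightly differently: it takes $H \in \Sigma$, notes that $H$ is open in $G^{\Sigma'}$ and, being a subgroup containing $e$, must contain one of the basic cosets of $\Sigma'$ through $e$, which is a member $K$ of $\Sigma'$; hence $S_{\Sigma'} \subseteq K \subseteq H$ for every $H \in \Sigma$, giving $S_{\Sigma'} \subseteq S_{\Sigma}$, and symmetry finishes. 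Your ``alternative route'' --- that $S_{\Sigma}$ is the intersection of all open neighbourhoods of $e$, hence a topological invariant --- is essentially the paper's argument repackaged; your primary route via the closure of the identity is a marginally different (and arguably cleaner) invariant formulation, though both ultimately rest on the same elementary observation that a basic open set containing $e$ is precisely a member of the neighbourhood family. Your closing remark that the converse fails without the hypothesis $S_{\Sigma}\in\Sigma$, $S_{\Sigma'}\in\Sigma'$ is also accurate and consistent with Theorem \ref{2.5}.
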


\begin{proof}
Let $H\in \Sigma$, then $H$ is open in $G^{\Sigma^{\prime}}$ since $G^{\Sigma} = G^{\Sigma^{\prime}}$. By definition of subgroup topology, $H$ is a union of some cosets of elements of $\Sigma^{\prime}$ say $H=\cup_{K\in \Sigma^{\prime}}gK$ for some $g\in G$. Since $H$ is a subgroup and it contains the trivial element, there is $gK$ such that $gK=K$ and so $K \subseteq H$. Therefore, since $S_{\Sigma^{\prime}} \subseteq K$ we have $S_{\Sigma^{\prime}} \subseteq H$ for each $H\in \Sigma$. Thus $S_{\Sigma^{\prime}} \subseteq S_{\Sigma}$. Similarly $S_{\Sigma} \subseteq S_{\Sigma^{\prime}}$, hence the result holds.
\end{proof}
 
 The following theorem shows that the converse of the above result holds under a condition. 
\begin{theorem}\label{2.5}
Let $G$ be a group and let $\Sigma$ and $\Sigma^{\prime}$ be two neighbourhood family on $G$ such that $S_{\Sigma} \leq S_{\Sigma^{\prime}}$ and $S_{\Sigma} \in \Sigma$. Then $G^{\Sigma^{\prime}} \preccurlyeq G^{\Sigma}$. In particular, if $H\leq K\leq G$, then $G^K \preccurlyeq G^H$.
\end{theorem}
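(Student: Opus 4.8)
The plan is to show that every basic open set of $G^{\Sigma^{\prime}}$ is open in $G^{\Sigma}$; since both topologies are homogeneous (left translations are homeomorphisms), it suffices to check this for basic open neighbourhoods of the identity, i.e. for the subgroups $K\in\Sigma^{\prime}$ themselves. So let $K\in\Sigma^{\prime}$. I would like to exhibit $K$ as a union of left cosets of elements of $\Sigma$. The natural candidate is $S_{\Sigma}$ itself: by hypothesis $S_{\Sigma}\in\Sigma$, and $S_{\Sigma}\leq S_{\Sigma^{\prime}}\leq K$ (the second inclusion because $S_{\Sigma^{\prime}}$ is the intersection of all members of $\Sigma^{\prime}$, one of which is $K$). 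Hence $K=\bigcup_{g\in K} gS_{\Sigma}$ is a union of left cosets of the element $S_{\Sigma}\in\Sigma$, so $K$ is open in $G^{\Sigma}$. This proves $G^{\Sigma^{\prime}}\preccurlyeq G^{\Sigma}$.

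For the ``in particular'' clause, apply this with $\Sigma=\Sigma^H$ and $\Sigma^{\prime}=\Sigma^K$. As noted in the text preceding Theorem~\ref{2.1}, the infinitesimal subgroup of $\Sigma^H$ is $H$ and that of $\Sigma^K$ is $K$, so $S_{\Sigma^H}=H$ and $S_{\Sigma^K}=K$. The hypothesis $H\leq K$ gives $S_{\Sigma^H}=H\leq K=S_{\Sigma^K}$, and $S_{\Sigma^H}=H\in\Sigma^H$ trivially since $H\subseteq H$. Thus the general statement yields $G^{K}=G^{\Sigma^K}\preccurlyeq G^{\Sigma^H}=G^{H}$.

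The only subtlety worth flagging is why $S_{\Sigma}\in\Sigma$ is genuinely needed and cannot be dropped: without it, $S_{\Sigma}$ need not be a basic open set of $G^{\Sigma}$, and a member $K$ of $\Sigma^{\prime}$ — which is only required to contain $S_{\Sigma^{\prime}}$, hence only $S_{\Sigma}$ — might fail to be a union of cosets of actual members of $\Sigma$. So the hypothesis is exactly what makes $S_{\Sigma}$ available as a usable basic neighbourhood. No serious obstacle arises beyond this bookkeeping; the argument is a direct unwinding of the definition of the subgroup topology together with the observation that when the infinitesimal subgroup belongs to the family, the single coset family $\{gS_{\Sigma}\mid g\in G\}$ already forms a basis for $G^{\Sigma}$ (this is the same reduction used in the proof of Theorem~\ref{2.3}).
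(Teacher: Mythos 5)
Your proposal is correct and follows essentially the same route as the paper: observe that $S_{\Sigma}\leq S_{\Sigma'}\leq K$ for each $K\in\Sigma'$, so $K$ is a union of left cosets of $S_{\Sigma}\in\Sigma$ and hence open in $G^{\Sigma}$, which gives $G^{\Sigma'}\preccurlyeq G^{\Sigma}$. Your handling of the hypothesis $S_{\Sigma}\leq S_{\Sigma'}$ is in fact slightly cleaner than the paper's write-up, which momentarily asserts $S_{\Sigma}=S_{\Sigma'}$ where only the inclusion is given.
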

\begin{proof}
Let $K \in \Sigma^{\prime}$. Then $S_{\Sigma} \subseteq K$ since $S_{\Sigma} = S_{\Sigma^{\prime}}$ and $S_{\Sigma^{\prime}} = \cap_{K\in \Sigma^{\prime}} K$. Thus $K$ is a union of some cosets of $S_{\Sigma}$ and so it is open in $G^{\Sigma}$ due to $S_{\Sigma} \in \Sigma$. Hence $G^{\Sigma^{\prime}} \preccurlyeq G^{\Sigma}$.    
\end{proof}


It is known that in every left (right) topological groups, any open subgroup is closed but the converse does not hold, in general. Note that by \cite[Proposition 2.4]{A20} one can show that the converse holds for subgroup topology if the infinitesimal subgroup is an open subgroup. Hence every closed subgroup of $G^H$ is also open.

 It is clear that if $H\in \Sigma$ for a neighbourhood family $\Sigma$, then $H$ is an open subgroup in $G^{\Sigma}$. Note that the converse does not hold in general. This property seems essential when we intend to use the topology on the group to classifying its subgroups.  The subgroup topology on a group $G$ determined by the neighbourhood family $\Sigma$ is called \textit{regular} if every open subgroup in $G$ belongs to $\Sigma$. Note that the subgroup topology on $G^H$ is regular. The following theorem seems interesting in order to dealing with regularity.
 
\begin{proposition}\label{2.6}
The subgroup topology on a group $G$ determined by neighbourhood family $\Sigma$ is regular if and only if each subgroup $H$ of $G$ which contains a subgroup $K$ with $K \in \Sigma$, belongs also in $\Sigma$.
\end{proposition}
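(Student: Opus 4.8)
The plan is to prove the two implications separately, using the definition of regularity (every open subgroup of $G^{\Sigma}$ belongs to $\Sigma$) together with the elementary fact, already noted in the excerpt, that every $K \in \Sigma$ is an open subgroup of $G^{\Sigma}$, and that a subgroup is open precisely when it is a union of left cosets of elements of $\Sigma$, equivalently when it contains some $K \in \Sigma$ (since a subgroup that contains a coset $gK$ with $g$ in the subgroup contains $K$ itself).

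For the "if" direction, suppose every subgroup $H \leq G$ containing some $K \in \Sigma$ lies in $\Sigma$. Let $H$ be any open subgroup of $G^{\Sigma}$. Being open, $H$ is a union of left cosets of elements of $\Sigma$; in particular, since $e \in H$, there is some coset $gK \subseteq H$ with $K \in \Sigma$ and $g \in H$. Then $K = g^{-1}(gK) \subseteq H$, so $H$ contains a member of $\Sigma$, and by hypothesis $H \in \Sigma$. Hence every open subgroup belongs to $\Sigma$, i.e.\ the topology is regular.

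For the "only if" direction, assume the subgroup topology is regular. Let $H \leq G$ be a subgroup containing some $K \in \Sigma$. Since $K$ is open in $G^{\Sigma}$ and $H$ is a union of left cosets of $K$ (as $K \subseteq H$), $H$ is open in $G^{\Sigma}$. By regularity, $H \in \Sigma$. This completes the equivalence.

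I do not anticipate a genuine obstacle here: the whole argument is a matter of carefully unwinding the definitions of "open subgroup" and "regular" and invoking the coset-absorption observation. The only point requiring a moment's care is the step that an open subgroup, being a union of cosets of elements of $\Sigma$, must actually contain a whole element of $\Sigma$ — this uses that the identity lies in the subgroup, so one of those cosets is the trivial coset of its own subgroup. Everything else is formal.
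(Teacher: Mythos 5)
Your proof is correct, and it is exactly the routine unwinding of the definitions that the paper intends: the paper in fact states Proposition \ref{2.6} without any proof at all. Both directions are handled properly, including the one point needing care (an open subgroup contains the identity, so the basic coset $gK$ through $e$ is $K$ itself, whence the subgroup contains a whole member of $\Sigma$).
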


In the following theorem we consider the subgroup topology on the direct product of groups. Note that by a routine calculation one can show that $G^{\Sigma}\times G'^{\Sigma '} \cong (G\times G')^{\Sigma\times \Sigma '}$.
\begin{theorem}\label{2.7}
Let G and $G^\prime$ are two group, H and $H^\prime$ are subgroups of G and $G^\prime$ respectively, then we have $G^H\times {G^\prime}^{H^\prime} \cong (G\times G^\prime)^{H\times H^\prime}$.
\end{theorem}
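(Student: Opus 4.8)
The plan is to reduce the statement to the remark immediately preceding the theorem (that $G^{\Sigma}\times G'^{\Sigma'}\cong(G\times G')^{\Sigma\times\Sigma'}$) together with Theorem~\ref{2.5}. Set
$\Sigma_1=\Sigma^H\times\Sigma^{H'}=\{K\times K'\mid H\leqslant K\leqslant G,\ H'\leqslant K'\leqslant G'\}$ and $\Sigma_2=\Sigma^{H\times H'}=\{M\leqslant G\times G'\mid H\times H'\subseteq M\}$. Both are neighbourhood families on $G\times G'$ (for $\Sigma_1$ this is immediate, since $(K_1\times K_1')\cap(K_2\times K_2')=(K_1\cap K_2)\times(K_1'\cap K_2')$ lies again in $\Sigma_1$). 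By the cited remark, $G^H\times {G'}^{H'}$ --- carrying the product topology --- is precisely $(G\times G')^{\Sigma_1}$, so the whole statement reduces to showing $(G\times G')^{\Sigma_1}=(G\times G')^{\Sigma_2}$, the right-hand side being $(G\times G')^{H\times H'}$ by definition.

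First I would compute the two infinitesimal subgroups. On the one hand $S_{\Sigma_2}=\bigcap\{M\mid H\times H'\subseteq M\}=H\times H'$, the minimum being realised by $M=H\times H'$. On the other hand, because $H\in\Sigma^H$ and $H'\in\Sigma^{H'}$ we have $H\times H'\in\Sigma_1$, while $H\times H'$ is clearly contained in every member $K\times K'$ of $\Sigma_1$; hence $S_{\Sigma_1}=H\times H'$ as well. So $S_{\Sigma_1}=S_{\Sigma_2}=H\times H'$, and this common subgroup is a member of both $\Sigma_1$ and $\Sigma_2$.

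Now apply Theorem~\ref{2.5} in both directions. With $\Sigma=\Sigma_1$, $\Sigma'=\Sigma_2$ the hypotheses $S_{\Sigma_1}\leqslant S_{\Sigma_2}$ and $S_{\Sigma_1}\in\Sigma_1$ hold, yielding $(G\times G')^{\Sigma_2}\preccurlyeq(G\times G')^{\Sigma_1}$; with $\Sigma=\Sigma_2$, $\Sigma'=\Sigma_1$ the hypotheses $S_{\Sigma_2}\leqslant S_{\Sigma_1}$ and $S_{\Sigma_2}\in\Sigma_2$ hold, yielding $(G\times G')^{\Sigma_1}\preccurlyeq(G\times G')^{\Sigma_2}$. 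Hence the two topologies agree, and combining this with the identification of the previous paragraph gives $G^H\times {G'}^{H'}\cong(G\times G')^{H\times H'}$.

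No genuine obstacle is expected; the only points requiring care are bookkeeping. One must be sure that the product of two neighbourhood families is again a neighbourhood family whose subgroup topology is the product topology (the content of the cited remark, ultimately because a box of cosets $gK\times g'K'$ coincides with the single coset $(g,g')(K\times K')$), and one must compute $S_{\Sigma_1}$ correctly --- this last step genuinely uses the defining feature of $\Sigma^H$-type families, namely that the infinitesimal subgroup is itself one of the subgroups in the family. Should one prefer to avoid Theorem~\ref{2.5}, the equality $(G\times G')^{\Sigma_1}=(G\times G')^{\Sigma_2}$ can instead be proved by hand: the inclusion $\Sigma_1\subseteq\Sigma_2$ is obvious, and conversely any $M\in\Sigma_2$ contains the product subgroup $K\times K'\in\Sigma_1$ with $K=\{g\in G\mid(g,e')\in M\}$ and $K'=\{g'\in G'\mid(e,g')\in M\}$, so each basic open set of one topology is a union of basic open sets of the other.
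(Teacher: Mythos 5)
Your proposal is correct, and every step checks out: $\Sigma_1=\Sigma^H\times\Sigma^{H'}$ and $\Sigma_2=\Sigma^{H\times H'}$ are both neighbourhood families with infinitesimal subgroup $H\times H'$, that subgroup belongs to both families, and the two applications of Theorem~\ref{2.5} are legitimate. The route is, however, somewhat different from the paper's. The paper argues directly on basic open sets in two lines: a product subgroup $M\times N\supseteq H\times H'$ is obviously open in $(G\times G')^{H\times H'}$, and conversely any subgroup $L\supseteq H\times H'$ is a union of cosets $(g,g')(H\times H')=gH\times g'H'$, each of which is basic open in the product topology. You instead pass through the preceding remark identifying the product topology with $(G\times G')^{\Sigma^H\times\Sigma^{H'}}$ and then invoke the general machinery of infinitesimal subgroups (Theorem~\ref{2.5}, applied in both directions). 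What your version buys is a cleaner conceptual statement --- two subgroup topologies whose families contain their common infinitesimal subgroup coincide --- which would generalize beyond the $\Sigma^H$ setting; what the paper's version buys is brevity and independence from Theorem~\ref{2.5}. Your closing ``by hand'' alternative is essentially the paper's proof (indeed, for the converse direction you do not even need the subgroups $K=\{g\mid(g,e')\in M\}$ and $K'=\{g'\mid(e,g')\in M\}$: it suffices that $M$ is a union of cosets of $H\times H'$ itself), so the two arguments ultimately rest on the same observation that the cosets of $H\times H'$ form a common basis.
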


\begin{proof}
Let $H\times H^\prime \subseteq M\times N \leq G\times G^\prime$ i.e., $M\times N$ is open in $G^H\times {G^\prime}^{H^\prime}$, then clearly $M\times N$ is open in $(G\times G^\prime)^{H\times H^\prime}$.
Conversely, let $H\times H^\prime \subseteq L \leq G\times G^\prime$ i.e., $L$ is open in $(G\times G^\prime)^{H\times H^\prime}$, then $L$ is a union of some cosets of $H\times H^\prime$. Hence $L$ is open in $G^H\times {G^\prime}^{H^\prime}$.
\end{proof}


\section{Comparison of Topologies on Fundamental Groups}
We recall a semicovering map $p : (\widetilde{X},\widetilde{x}_0) \rightarrow (X,x_0)$, introduced by Brazas \cite{B12,B14}, which is a local homeomorphism with continuous lifting of paths and homotopies. 
Also a subgroup $H$ of the fundamental group $\pi_1(X,x_0)$ is called a \textit{semicovering subgroup} if there is a semicovering map $p : (\widetilde{X},\widetilde{x}_0) \rightarrow (X,x_0)$ such that $H = p_\ast \pi_1(\widetilde{X},\widetilde{x}_0)$ (see \cite{B14, KTM}).

Pakdaman et al. \cite[Definition 2.4]{PTM17} called a pointed topological space $(X,x_0)$ \textit{coverable} if it has the categorical universal covering space or equivalently the Spanier group, $\pi^{sp}(X,x_0)$, is a covering subgroup.

In the following theorem, we compare the Spanier topology on the fundamental group with the subgroup topology induced by the Spanier subgroup.
\begin{theorem}\label{3.1}
$(i)$ If $H=\pi^{sp}(X,x_0)$, then $\pi_1^{Span}(X,x_0) \preccurlyeq \pi_1^H(X,x_0)$.\\
$(ii)$ $\pi_1^{Span}(X,x_0)=\pi_1^{\pi^{sp}(X,x_0)}(X,x_0)$ if and only if $X$ is coverable.\\
$(iii)$ If $\pi_1^{Span}(X,x_0)=\pi_1^K(X,x_0)$ for a subgroup $K$ of $\pi_1(X,x_0)$, then $K=\pi^{sp}(X,x_0)$.
\end{theorem}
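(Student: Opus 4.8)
The plan is to handle the three parts using the results of Section~\ref{Sec2}, after recording two elementary observations. First, $S_{\Sigma^{Span}}=\pi^{sp}(X,x_0)$, since by definition the Spanier group is the intersection of all Spanier subgroups $\pi(\mathcal{U},x_0)$, so it coincides with the infinitesimal subgroup of $\Sigma^{Span}$. Second, for any subgroup $K$ of $\pi_1(X,x_0)$ one has $S_{\Sigma^K}=K$, because $K\in\Sigma^K$ and every member of $\Sigma^K$ contains $K$. With these in hand, part $(i)$ is quick: since $\pi^{sp}(X,x_0)=\bigcap_{\mathcal{U}}\pi(\mathcal{U},x_0)$, every Spanier subgroup $\pi(\mathcal{U},x_0)$ contains $H=\pi^{sp}(X,x_0)$ and hence lies in $\Sigma^H$; thus $\Sigma^{Span}\subseteq\Sigma^H$, so every basic open left coset of $\pi_1^{Span}(X,x_0)$ is also basic open in $\pi_1^H(X,x_0)$, giving $\pi_1^{Span}(X,x_0)\preccurlyeq\pi_1^H(X,x_0)$. (Equivalently this is Theorem~\ref{2.5} applied with $\Sigma=\Sigma^H$, $\Sigma'=\Sigma^{Span}$, using $S_{\Sigma^H}=H=\pi^{sp}(X,x_0)=S_{\Sigma^{Span}}$ and $S_{\Sigma^H}\in\Sigma^H$.) Part $(iii)$ is then immediate: if $\pi_1^{Span}(X,x_0)=\pi_1^K(X,x_0)$, Theorem~\ref{2.4} forces $S_{\Sigma^{Span}}=S_{\Sigma^K}$, that is $\pi^{sp}(X,x_0)=K$.

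For part $(ii)$ the key point is the equivalence: $X$ is coverable if and only if $\pi^{sp}(X,x_0)$ is itself a Spanier subgroup, i.e. $\pi^{sp}(X,x_0)\in\Sigma^{Span}$. Indeed, if $X$ is coverable then $\pi^{sp}(X,x_0)$ is a covering subgroup, and lifting a lollipop $\alpha*\beta*\alpha^{-1}$ back through an evenly covered chart shows that $\pi(\mathcal{U},x_0)\subseteq\pi^{sp}(X,x_0)$ for an open cover $\mathcal{U}$ by evenly covered sets of the corresponding covering map; since always $\pi^{sp}(X,x_0)\subseteq\pi(\mathcal{U},x_0)$, we get $\pi^{sp}(X,x_0)=\pi(\mathcal{U},x_0)\in\Sigma^{Span}$. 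Conversely, if $\pi^{sp}(X,x_0)=\pi(\mathcal{U},x_0)$ for some open cover $\mathcal{U}$, then $\pi^{sp}(X,x_0)$ is a Spanier subgroup, hence a covering subgroup, so $X$ is coverable.

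Granting this equivalence, the $(\Leftarrow)$ direction of $(ii)$ follows by combining part $(i)$ with Theorem~\ref{2.5} applied with $\Sigma=\Sigma^{Span}$, $\Sigma'=\Sigma^H$: now $S_{\Sigma^{Span}}=\pi^{sp}(X,x_0)\in\Sigma^{Span}$ and $S_{\Sigma^{Span}}=S_{\Sigma^H}$, so $\pi_1^H(X,x_0)\preccurlyeq\pi_1^{Span}(X,x_0)$, and together with $(i)$ this gives equality. For $(\Rightarrow)$, suppose $\pi_1^{Span}(X,x_0)=\pi_1^H(X,x_0)$. Then $H=\pi^{sp}(X,x_0)$, being the infinitesimal subgroup of $\Sigma^H$, is open in $\pi_1^H(X,x_0)=\pi_1^{Span}(X,x_0)$, hence is a union of left cosets of Spanier subgroups; since $e\in H$, one of these cosets equals a Spanier subgroup $\pi(\mathcal{U},x_0)\subseteq H=\pi^{sp}(X,x_0)$, whence $\pi^{sp}(X,x_0)=\pi(\mathcal{U},x_0)$ and $X$ is coverable by the equivalence above.

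The main obstacle is precisely that equivalence — more specifically the implication that a covering subgroup always contains a Spanier subgroup, which is the classical lollipop-lifting argument of Spanier; I would cite it from \cite{Span} (in the form needed here it is recorded in \cite{PTM17}) rather than reprove it. Everything else is routine bookkeeping with Theorems~\ref{2.4} and~\ref{2.5} once the identities $S_{\Sigma^{Span}}=\pi^{sp}(X,x_0)$ and $S_{\Sigma^K}=K$ are noted.
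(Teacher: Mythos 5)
Your proposal is correct and follows essentially the same route as the paper: part $(i)$ via the inclusion $\Sigma^{Span}\subseteq\Sigma^{H}$, part $(iii)$ via Theorem~\ref{2.4} on infinitesimal subgroups, and part $(ii)$ via the equivalence ``$X$ coverable $\iff$ $\pi^{sp}(X,x_0)\in\Sigma^{Span}$,'' which the paper simply cites from Spanier's Theorem 2.5.13 where you sketch the lollipop-lifting argument. If anything, you spell out more carefully than the paper does why $\pi^{sp}(X,x_0)\in\Sigma^{Span}$ yields equality of the two topologies and why the converse forces $\pi^{sp}(X,x_0)$ to be open and hence a Spanier subgroup.
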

\begin{proof}
$(i)$ It is known that $\pi_1^{Span}(X,x_0)$ has subgroup topology with respect to $\Sigma^{Span}=\{K\leqslant \pi_1(X,x_0) \ | \ K \text{ is a Spanier subgroup} \}$ (see \cite{W}).
Since $H=\pi^{sp}(X,x_0)$,  $\Sigma^H=\{K\leqslant \pi_1(X,x_0) \ | \ \pi^{sp}(X,x_0)\subseteq K \}$,
and $\pi^{sp}(X,x_0)=\cap_{K\in \Sigma^{sp}} K$, we have
$\Sigma^{sp}\subseteq \Sigma^H$. Hence $\pi_1^{span}(X,x_0)\preccurlyeq \pi_1^H(X,x_0)$.\\
$(ii)$ By definition $X$ be coverable if and only if $\pi^{sp}(X,x_0)$ is a covering subgroup. By \cite[Theorem 2.5.13]{Span} $\pi^{sp}(X,x_0)$ is a covering subgroup if and only if then $\pi^{sp}(X,x_0)\in \Sigma^{Span}$. Hence  $X$ be coverable if and only if $\pi_1^H(X,x_0)=\pi_1^{Span}(X,x_0)$.\\
$(iii)$ It holds by Theorem \ref{2.4}.
\end{proof}

Note that in the above theorem the strict inequality holds for the Hawaiian Earring $\mathbb{HE}$ since it is not coverable (see \cite[Theorem 2.8]{PTM17}). 

In the following theorem, we compare the path Spanier topologies on the fundamental group with the subgroup topologies induced by the Spanier and the path Spanier subgroups.
\begin{theorem}\label{3.2}
$(i)$ $\pi_1^{pSpan}(X,x_0) \preccurlyeq \pi_1^{\widetilde{\pi}^{sp}(X,x_0)}(X,x_0)$.\\
$(ii)$ If $\pi_1^{pSpan}(X,x_0)=\pi_1^K(X,x_0)$ for a subgroup $K$ of $\pi_1(X,x_0)$, then $K=\widetilde{\pi}^{sp}(X,x_0)$.

Let $X$ be connected and locally path connected, then\\
$(iii)$ $\pi_1^{pSpan}(X,x_0)=\pi_1^{\widetilde{\pi}^{sp}(X,x_0)}(X,x_0)$ if and only if $\widetilde{\pi}^{sp}(X,x_0)$ is a semicovering subgroup.\\
$(iv)$ $\pi_1^{\pi^{sp}(X,x_0)}(X,x_0) \preccurlyeq \pi_1^{pSpan}(X,x_0)$ if and only if $\pi^{sp}(X,x_0)$ is a semicovering subgroup.\\
$(v)$ If $\pi^{sp}(X,x_0)$ is a semicovering subgroup, then $\pi_1^{pSpan}(X,x_0)=\pi_1^{\pi^{sp}(X,x_0)}(X,x_0)$ if and only if $\pi^{sp}(X,x_0)=\widetilde{\pi}^{sp}(X,x_0)$.
\end{theorem}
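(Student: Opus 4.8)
The plan is to treat the five parts in a way that reuses the infrastructure of Section 2 as much as possible, since each statement is really a comparison of two subgroup topologies whose infinitesimal subgroups are path Spanier groups or Spanier groups.

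For $(i)$, I would argue exactly as in Theorem \ref{3.1}$(i)$: the path Spanier topology $\pi_1^{pSpan}(X,x_0)$ is, by definition, the subgroup topology determined by the neighbourhood family $\Sigma^{pSpan}$ of all path Spanier subgroups, and $\widetilde{\pi}^{sp}(X,x_0)=\cap_{K\in\Sigma^{pSpan}}K$ is its infinitesimal subgroup. Hence $\Sigma^{pSpan}\subseteq \Sigma^{\widetilde{\pi}^{sp}(X,x_0)}$, and by the monotonicity direction of Theorem \ref{2.5} (or directly: every basic open set of $\pi_1^{pSpan}$ is a coset of some path Spanier subgroup, hence open in $\pi_1^{\widetilde{\pi}^{sp}(X,x_0)}$) we get $\pi_1^{pSpan}(X,x_0)\preccurlyeq \pi_1^{\widetilde{\pi}^{sp}(X,x_0)}(X,x_0)$. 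Part $(ii)$ is then immediate from Theorem \ref{2.4}: if $\pi_1^{pSpan}(X,x_0)=\pi_1^K(X,x_0)$, then the two neighbourhood families $\Sigma^{pSpan}$ and $\Sigma^K$ have equal infinitesimal subgroups, i.e. $\widetilde{\pi}^{sp}(X,x_0)=S_{\Sigma^K}=K$.

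For $(iii)$, the forward direction: if equality holds, then by the regularity remark after Proposition \ref{2.6} (the subgroup topology on $\pi_1^{H}$ is regular, so every open subgroup lies in $\Sigma^{H}$), the infinitesimal subgroup $\widetilde{\pi}^{sp}(X,x_0)$ is itself open in $\pi_1^{pSpan}(X,x_0)$, hence belongs to $\Sigma^{pSpan}$, i.e. it is itself a path Spanier subgroup; for $X$ connected and locally path connected one then invokes the correspondence (as in \cite{T}, \cite{A20}) between path Spanier subgroups that contain $\widetilde{\pi}^{sp}$ and semicoverings to conclude $\widetilde{\pi}^{sp}(X,x_0)$ is a semicovering subgroup. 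Conversely, if $\widetilde{\pi}^{sp}(X,x_0)$ is a semicovering subgroup, it is a path Spanier subgroup, so $\widetilde{\pi}^{sp}(X,x_0)\in\Sigma^{pSpan}$, and since it is the smallest such, every $K\in\Sigma^{\widetilde{\pi}^{sp}(X,x_0)}$ is a union of cosets of $\widetilde{\pi}^{sp}(X,x_0)$ and hence open in $\pi_1^{pSpan}(X,x_0)$; combined with $(i)$ this gives equality. Part $(iv)$ is the same argument applied to the Spanier group in place of the path Spanier group: $\pi_1^{\pi^{sp}(X,x_0)}(X,x_0)\preccurlyeq \pi_1^{pSpan}(X,x_0)$ holds iff every $K\supseteq\pi^{sp}(X,x_0)$ is open in $\pi_1^{pSpan}$, which (using Theorem \ref{2.5} with $S_\Sigma=\pi^{sp}$ needing to lie in $\Sigma^{pSpan}$) is equivalent to $\pi^{sp}(X,x_0)$ being a path Spanier subgroup, i.e. a semicovering subgroup for $X$ connected and locally path connected. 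Finally, $(v)$: assuming $\pi^{sp}(X,x_0)$ is a semicovering subgroup, part $(iv)$ gives $\pi_1^{\pi^{sp}(X,x_0)}(X,x_0)\preccurlyeq\pi_1^{pSpan}(X,x_0)$, and part $(i)$ gives $\pi_1^{pSpan}(X,x_0)\preccurlyeq\pi_1^{\widetilde{\pi}^{sp}(X,x_0)}(X,x_0)$; so $\pi_1^{pSpan}=\pi_1^{\pi^{sp}(X,x_0)}$ holds iff $\pi_1^{\pi^{sp}(X,x_0)}(X,x_0)=\pi_1^{\widetilde{\pi}^{sp}(X,x_0)}(X,x_0)$, and by Theorem \ref{2.4} together with Theorem \ref{2.5} (both infinitesimal subgroups must coincide and each lies in the relevant family) this is equivalent to $\pi^{sp}(X,x_0)=\widetilde{\pi}^{sp}(X,x_0)$.

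The main obstacle I anticipate is $(iii)$ and $(iv)$, specifically the step identifying "is an open subgroup in the path Spanier topology" with "is a path Spanier subgroup" and then with "is a semicovering subgroup." The first half is handled by regularity of the $\pi_1^H$ topologies (Proposition \ref{2.6} and the remark following it), but the second half genuinely needs the hypothesis that $X$ is connected and locally path connected and relies on the classification of semicoverings by path Spanier–type subgroups from \cite{T, A20}; I would state this correspondence carefully as the one external input and make sure the direction I use (a subgroup containing $\widetilde{\pi}^{sp}$ is a semicovering subgroup iff it is a path Spanier subgroup) is exactly what those references provide. Everything else is bookkeeping with cosets and the already-established Theorems \ref{2.4} and \ref{2.5}.
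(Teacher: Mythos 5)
Your overall route is the same as the paper's: parts (i) and (ii) via $\Sigma^{pSpan}\subseteq\Sigma^{\widetilde{\pi}^{sp}(X,x_0)}$ and Theorem \ref{2.4}, parts (iii) and (iv) via the classification of semicovering subgroups from \cite[Theorem 4.1]{T}, and part (v) by combining the earlier parts. However, two of your justifications need repair, precisely at the step you flagged as the main obstacle.

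First, the external input from \cite{T} is: for $X$ connected and locally path connected, a subgroup $H\leq\pi_1(X,x_0)$ is a semicovering subgroup if and only if it \emph{contains} some path Spanier subgroup $\widetilde{\pi}(\mathcal{U},x_0)$ (equivalently, is open in $\pi_1^{pSpan}(X,x_0)$) --- not ``if and only if it \emph{is} a path Spanier subgroup,'' which is the version you record in your closing paragraph. For $\widetilde{\pi}^{sp}(X,x_0)$ the two conditions coincide, since a path Spanier subgroup contained in the intersection of all path Spanier subgroups must equal that intersection; so (iii) survives. But in (iv) your reduction to ``$\pi^{sp}(X,x_0)$ is a path Spanier subgroup'' is too strong and would fail: $\pi^{sp}(X,x_0)$ is the intersection of (ordinary) Spanier subgroups, it only contains $\widetilde{\pi}^{sp}(X,x_0)$, and there is no reason for it to be a path Spanier subgroup itself. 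The correct pivot is ``$\pi^{sp}(X,x_0)$ is open in $\pi_1^{pSpan}(X,x_0)$,'' i.e.\ contains some $\widetilde{\pi}(\mathcal{U},x_0)$; the forward implication of (iv) then follows because every $K\supseteq\pi^{sp}(X,x_0)$ is a union of cosets of the open subgroup $\pi^{sp}(X,x_0)$. Second, you cannot appeal to the regularity remark after Proposition \ref{2.6} for $\pi_1^{pSpan}(X,x_0)$: regularity is only asserted there for the topologies $G^H$, and the path Spanier topology is determined by $\Sigma^{pSpan}$, which is not of that form a priori. What you actually need (and what the paper uses) is the elementary observation that an open subgroup in a subgroup topology is a union of cosets of members of the neighbourhood family, and the coset containing the identity must itself be such a member, hence the open subgroup contains a path Spanier subgroup. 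With these two corrections your argument coincides with the paper's proof.
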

\begin{proof}
$(i)$ The neighbourhood family of $\pi_1^{pSpan}(X,x_0)$ and $\pi_1^{\widetilde{\pi}^{sp}(X,x_0)}(X,x_0)$ are $\Sigma^{pSpan}=\{K \ | \ \text{K is a path Spanier subgroup} \}$ and $\Sigma^{\widetilde{\pi}^{sp}(X,x_0) }=\{H \ | \ \widetilde{\pi}^{sp}(X,x_0)\subseteq H \}$, respectively. By definition of $\widetilde{\pi}^{sp}(X,x_0)$,  $\Sigma^{pSpan}\subseteq \Sigma^{\widetilde{\pi}^{sp}(X,x_0) }$.
Thus $\pi_1^{pSpan}(X,x_0)\preccurlyeq \pi_1^{\widetilde{\pi}^{sp}(X,x_0)}(X,x_0)$.\\
$(ii)$ It holds by Theorem \ref{2.4}.\\
$(iii)$ By \cite[Theorem 4.1]{T}  $\widetilde{\pi}^{sp}(X,x_0)$ is a semicovering subgroup if and only if $\widetilde{\pi}^{sp}(X,x_0)\in \Sigma^{pSpan}$.
Hence $\pi_1^{\widetilde{\pi}^{sp}(X,x_0)}(X,x_0)= \pi_1^{pSpan}(X,x_0)$ if and only if $\widetilde{\pi}^{sp}(X,x_0)$ is a semicovering subgroup.\\
$(iv)$ By \cite[Theorem 4.1]{T} $\pi^{sp}(X,x_0)$ is a semicovering subgroup if and only if  there is a path open cover $\mathcal{U}$ such that $\widetilde{\pi}(\mathcal{U},x_0)\leq \pi^{sp}(X,x_0)$. If $\pi^{sp}(X,x_0)$ is a semicovering subgroup, then $\pi^{sp}(X,x_0)$ is open in $\pi_1^{pSpan}(X,x_0)$. If $\pi^{sp}(X,x_0)\subseteq K\leqslant \pi_1(X,x_0)$ i.e., $K$ is open in $\pi_1^H(X,x_0)$, then $K=\bigcup _{[\alpha]\in K}[\alpha]\pi^{sp}(X,x_0)$. Since $[\alpha]\pi^{sp}(X,x_0)$ is open in $\pi_1^{pSpan}(X,x_0)$, so is $K$ and hence $\pi_1^H(X,x_0) \preccurlyeq \pi_1^{pSpan}(X,x_0)$.

Conversely, if $\pi_1^{\pi^{sp}(X,x_0)}(X,x_0) \preccurlyeq \pi_1^{pSpan}(X,x_0)$, then $\pi^{sp}(X,x_0)$ is open in $\pi_1^{pSpan}(X,x_0)$. Therefore $\pi^{sp}(X,x_0)$ is a union of some cosets of some $\widetilde{\pi}(\mathcal{U},x_0)$. Since the identity element does not belong to any cosets except the subgroups, one of the cosets of some $\widetilde{\pi}(\mathcal{U},x_0)$ must be a subgroup. Hence $\pi^{sp}(X,x_0)$ contains one of the $\widetilde{\pi}(\mathcal{U},x_0)$ and so $\pi^{sp}(X,x_0)$ is a semicovering subgroup.\\
$(v)$ By $(ii)$ and $(iv)$.
\end{proof}

\begin{remark}\label{3.3}
\begin{itemize}
\item[(i)] Note that using  \cite[Proposition 3.16]{B13} the authors of \cite{A20} show that if $X$ is a locally path connected space, then $\pi_1^{pSpan}(X, x_0)$ is coarser than $\pi_1^{\mathrm{\tau}}(X, x_0)$ (see \cite[Corollary 3.15]{A20}).
\item[(ii)] By considering the definitions of $\pi_{1}^{qtop}(X,x_0)$ and $\pi_1^{\tau}(X, x_0)$ it is easy to see that $\pi_{1}^{qtop}(X,x_0)$ is finer than $\pi_1^{\tau}(X, x_0)$ (see \cite{B13}). The strict inequality holds for the Hawaiian Earring $\mathbb{HE}$, $\pi_1^{qtop}(\mathbb{HE},0)$ is not a topological group by \cite{F11} and  $\pi_1^{\tau}(\mathbb{HE},0)$  is a topological group by \cite{B13}. The equality holds if and only if $\pi_1^{qtop}(X,x_0)$ is a topological group (see \cite{B13}).
\item[(iii)]  In \cite[Proposition 3.2]{BF24} it is shown that the shape topology of $\pi_1^{sh}(X, x_0)$ is coarser than that of $\pi_1^{\tau}(X, x_0)$. Note that the strict inequality holds for the Hawaiian Earring $\mathbb{HE}$ i.e, $\pi_1^{sh}(\mathbb{HE}, 0)\prec \pi_1^{\tau}(\mathbb{HE}, 0)$ (see \cite[Example 3.26]{B13}). 
Note that by \cite[Proposition 3.5]{BF14} one can show that $\pi_1^{Span}(X, x_0)$ is finer than $\pi_1^{tSpan}(X, x_0)$. Also by \cite[Proposition 5.8]{BF14} $\pi_1^{tSpan}(X, x_0)$ is finer than $\pi_1^{sh}(X, x_0)$. Moreover, by \cite[Theorem 7.6]{BF14} one can easily see that the equality 
$\pi_1^{sh}(X, x_0)=\pi_1^{tSpan}(X, x_0)= \pi_1^{Span}(X,x_0)$ holds if $X$ is locally path connected, paracompact and Hausdorff.
\end{itemize}
\end{remark}

In the following theorem, we compare the generalized covering topology on the fundamental group with the subgroup topology induced by the generalized covering subgroup.
\begin{theorem}\label{3.4}
Let $H=\pi^{gc}(X,x_0)$, then $\pi_1^H(X,x_0)=\pi_1^{gcov}(X,x_0)$. Also, if $\pi_1^{gcov}(X,x_0)=\pi_1^K(X,x_0)$ for a subgroup $K$ of $\pi_1(X,x_0)$, then $K=\pi^{gc}(X,x_0)$.
\end{theorem}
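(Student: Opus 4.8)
The plan is to identify both topologies as subgroup topologies sharing the same infinitesimal subgroup, namely $\pi^{gc}(X,x_0)$, and then to apply Theorem~\ref{2.5} in each direction. Recall that $\pi_1^{gcov}(X,x_0)=\pi_1^{\Sigma^{gcov}}(X,x_0)$, where $\Sigma^{gcov}$ is the neighbourhood family of all generalized covering subgroups of $\pi_1(X,x_0)$, so that by definition of $\pi^{gc}(X,x_0)$ we have $S_{\Sigma^{gcov}}=\cap\{K\mid K\in\Sigma^{gcov}\}=\pi^{gc}(X,x_0)$. On the other hand, the infinitesimal subgroup of $\Sigma^H$ is $H=\pi^{gc}(X,x_0)$ (as noted before Theorem~\ref{2.1}). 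Hence $S_{\Sigma^{gcov}}=S_{\Sigma^H}=H$.

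For the inequality $\pi_1^{gcov}(X,x_0)\preccurlyeq\pi_1^{H}(X,x_0)$, the quickest route is to observe directly that every generalized covering subgroup $K$ satisfies $H=\pi^{gc}(X,x_0)\subseteq K$, so $\Sigma^{gcov}\subseteq\Sigma^{H}$; a larger neighbourhood family produces a (weakly) finer subgroup topology, whence the claim. Equivalently, this is Theorem~\ref{2.5} applied with $\Sigma=\Sigma^{H}$ and $\Sigma'=\Sigma^{gcov}$, where the hypotheses $S_{\Sigma^{H}}\le S_{\Sigma^{gcov}}$ and $S_{\Sigma^{H}}=H\in\Sigma^{H}$ are immediate.

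The reverse inequality $\pi_1^{H}(X,x_0)\preccurlyeq\pi_1^{gcov}(X,x_0)$ is the substantive part, and it rests on the key structural fact that $\pi^{gc}(X,x_0)$ is \emph{itself} a generalized covering subgroup, i.e. $S_{\Sigma^{gcov}}=\pi^{gc}(X,x_0)\in\Sigma^{gcov}$. This is where genuine input from the theory of generalized covering spaces is needed: unlike classical covering subgroups, the class of generalized covering subgroups is closed under arbitrary intersection, the corresponding generalized covering map being assembled from the given family via a fibre-product/inverse-limit construction (see \cite{A16,B15,FZ7}); I expect verifying this closure property to be the main obstacle, and it is the only place one leaves the purely formal theory of subgroup topologies. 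Granting it, Theorem~\ref{2.5} with $\Sigma=\Sigma^{gcov}$ and $\Sigma'=\Sigma^{H}$ applies, since $S_{\Sigma^{gcov}}\le S_{\Sigma^{H}}$ (in fact equality) and $S_{\Sigma^{gcov}}\in\Sigma^{gcov}$, yielding $\pi_1^{H}(X,x_0)\preccurlyeq\pi_1^{gcov}(X,x_0)$. Combining the two inequalities gives $\pi_1^{H}(X,x_0)=\pi_1^{gcov}(X,x_0)$.

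For the final (uniqueness) assertion, suppose $\pi_1^{gcov}(X,x_0)=\pi_1^{K}(X,x_0)$ for a subgroup $K\le\pi_1(X,x_0)$. Applying Theorem~\ref{2.4} to the neighbourhood families $\Sigma^{gcov}$ and $\Sigma^{K}$ gives $S_{\Sigma^{gcov}}=S_{\Sigma^{K}}$; since $S_{\Sigma^{gcov}}=\pi^{gc}(X,x_0)$ and $S_{\Sigma^{K}}=K$, we conclude $K=\pi^{gc}(X,x_0)$, as required.
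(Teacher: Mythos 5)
Your proof is correct and follows essentially the same route as the paper: both directions hinge on the inclusion $\Sigma^{gcov}\subseteq\Sigma^{H}$ and on the fact from \cite{B15} that $\pi^{gc}(X,x_0)$ is itself a generalized covering subgroup, and the uniqueness claim is Theorem~\ref{2.4} in both cases. The only cosmetic difference is that you invoke Theorem~\ref{2.5} for the reverse inequality where the paper writes out the same coset argument inline.
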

\begin{proof}
By definitions of  $\pi_1^{gcov}(X,x_0)$  and $\pi^{gc}(X,x_0)$ it is easy to see that $\Sigma^{gcov}\subseteq \Sigma^H$ and so $\pi_1^{gcov}(X,x_0)\preccurlyeq \pi_1^H(X,x_0)$.

Let $\pi^{gc}(X,x_0)\subseteq K\leqslant \pi_1(X,x_0)$ i.e., $K$ is open in $\pi_1^H(X,x_0)$, then $K$ is a union of some cosets of $\pi^{gc}(X,x_0)$. By \cite{B15} $\pi^{gc}(X,x_0)$ is a generalizes covering subgroup and so it is open in $\pi_1^{gcov}(X,x_0)$.  Hence $K$ is open in $\pi_1^{gcov}(X,x_0)$ and thus $\pi_1^H(X,x_0)\preccurlyeq \pi_1^{gcov}(X,x_0)$.
Therefore $\pi_1^H(X,x_0)=\pi_1^{gcov}(X,x_0)$ when $H=\pi^{gc}(X,x_0)$.

If $\pi_1^{gcov}(X,x_0)=\pi_1^K(X,x_0)$ for a subgroup $K$ of $\pi_1(X,x_0)$, then by Theorem \ref{2.4} $K=\pi^{gc}(X,x_0)$.
\end{proof}

The following result is a consequence Theorems \ref{2.4}, \ref{2.5} and \ref{3.4}.
\begin{corollary}\label{3.5}
$(i)$ $\pi_1^{\widetilde{\pi}^{sp}(X,x_0)}(X,x_0) \preccurlyeq \pi_1^{gcov}(X,x_0)$. Moreover, $\pi_1^{\widetilde{\pi}^{sp}(X,x_0)}(X,x_0)= \pi_1^{gcov}(X,x_0)$ if and only if $\pi^{gc}(X,x_0)=\widetilde{\pi}^{sp}(X,x_0)$.\\
$(ii)$ $\pi_1^{gcov}(X,x_0) \preccurlyeq \pi_1^{\pi^{sg}(X,x_0)}(X,x_0)$. Moreover, $\pi_1^{gcov}(X,x_0)= \pi_1^{\pi^{sg}(X,x_0)}(X,x_0)$ if and only if $\pi^{gc}(X,x_0)=\pi^{sg}(X,x_0)$.
\end{corollary}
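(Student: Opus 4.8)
The plan is to deduce everything from the inclusion chain of subgroups recalled in the introduction, namely $\pi^{sg}(X,x_0)\leq \pi^{gc}(X,x_0)\leq \widetilde{\pi}^{sp}(X,x_0)$, together with Theorems \ref{2.4}, \ref{2.5} and \ref{3.4}; no idea beyond careful bookkeeping is needed. Throughout I would keep in mind that the infinitesimal subgroup of the neighbourhood family $\Sigma^H$ is $H$ itself, and that by Theorem \ref{3.4} we may freely rewrite $\pi_1^{gcov}(X,x_0)=\pi_1^{\pi^{gc}(X,x_0)}(X,x_0)$.

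For part $(i)$ I would start from the inclusion $\pi^{gc}(X,x_0)\leq \widetilde{\pi}^{sp}(X,x_0)$ and apply the ``in particular'' clause of Theorem \ref{2.5} with $H=\pi^{gc}(X,x_0)$ and $K=\widetilde{\pi}^{sp}(X,x_0)$ (its hypothesis $S_{\Sigma^{\pi^{gc}(X,x_0)}}\in \Sigma^{\pi^{gc}(X,x_0)}$ is automatic, since $\pi^{gc}(X,x_0)\in \Sigma^{\pi^{gc}(X,x_0)}$) to obtain $\pi_1^{\widetilde{\pi}^{sp}(X,x_0)}(X,x_0)\preccurlyeq \pi_1^{\pi^{gc}(X,x_0)}(X,x_0)$, and then replace the right-hand side by $\pi_1^{gcov}(X,x_0)$ using Theorem \ref{3.4}. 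For part $(ii)$ I would run the same argument with the inclusion $\pi^{sg}(X,x_0)\leq \pi^{gc}(X,x_0)$, which yields $\pi_1^{gcov}(X,x_0)=\pi_1^{\pi^{gc}(X,x_0)}(X,x_0)\preccurlyeq \pi_1^{\pi^{sg}(X,x_0)}(X,x_0)$.

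For the two biconditionals, the ``if'' directions are immediate: once the relevant subgroups coincide, the associated neighbourhood families $\Sigma^H$ coincide, hence so do the subgroup topologies, and Theorem \ref{3.4} identifies $\pi_1^{\pi^{gc}(X,x_0)}(X,x_0)$ with $\pi_1^{gcov}(X,x_0)$. For the ``only if'' directions I would invoke the uniqueness clause of Theorem \ref{3.4}: from $\pi_1^{\widetilde{\pi}^{sp}(X,x_0)}(X,x_0)=\pi_1^{gcov}(X,x_0)$, taking $K=\widetilde{\pi}^{sp}(X,x_0)$ in that clause forces $\widetilde{\pi}^{sp}(X,x_0)=\pi^{gc}(X,x_0)$, and likewise $\pi_1^{gcov}(X,x_0)=\pi_1^{\pi^{sg}(X,x_0)}(X,x_0)$ forces $\pi^{sg}(X,x_0)=\pi^{gc}(X,x_0)$; alternatively one could first use Theorem \ref{3.4} to rewrite $\pi_1^{gcov}$ as $\pi_1^{\pi^{gc}(X,x_0)}$ and then apply Theorem \ref{2.4} to read off equality of the infinitesimal subgroups. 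I do not expect any genuine obstacle here: the only points requiring attention are getting the direction of $\preccurlyeq$ correct and confirming that the ``in particular'' hypothesis of Theorem \ref{2.5} is satisfied, which it always is since $H\in \Sigma^H$.
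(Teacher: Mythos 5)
Your proof is correct and is exactly the argument the paper intends: the paper gives no explicit proof, merely noting that the corollary follows from Theorems \ref{2.4}, \ref{2.5} and \ref{3.4}, and your deduction via the inclusion chain $\pi^{sg}(X,x_0)\leq \pi^{gc}(X,x_0)\leq \widetilde{\pi}^{sp}(X,x_0)$ together with the identification $\pi_1^{gcov}(X,x_0)=\pi_1^{\pi^{gc}(X,x_0)}(X,x_0)$ fills in precisely those details, with the directions of $\preccurlyeq$ handled correctly.
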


Note that as an example for the equality in the above corollary consider $\pi^{sg}(\mathbb{HA},b)=\pi^{gc}(\mathbb{HA},b)$ (\cite [Example 3.12]{A16}). Also for the strict inequality consider the space $RX$ in Example \ref{4.2} since $\pi^{sg}(RX,x_0)=1 \neq \pi^{gc}(RX,x_0)$.\\

A topological space $X$ is called \textit{semilocally $H$-connected at } $x_{0}\in{X}$ if there exists an open neighbourhood $U\ of\ x_{0}$ with $i_*{\pi }_1(U,x_{0})\le H$, where $i_*=\pi_1(i)$ is the induced homomorphism by the inclusion $i:U\hookrightarrow X$. Also, $X$ is called \textit{semilocally $H$-connected} if for every $x\in X$ and for every path $\alpha$ from $x_0$ to $x$ the space $X$ is semilocally $[\alpha^{-1}H\alpha]$-connected at $x$ (see \cite[Definition 4.1]{A16}).
If X is semilocally $H$-connected for $H=\pi^{gc}(X,x_0)$, then $X$ is coverable and $\pi^{gc}(X,x_0)=\pi^{sp}(X,x_0)$. Therefore by the above theorems we have the following corollary (see \cite[Corollary 4.8]{A16}).

\begin{corollary}\label{3.6}
Let $H=\pi^{gc}(X,x_0)$ and $X$ be a connected, locally path connected and semilocally $H$-connected space, then $\pi_1^{Span}(X,x_0) = \pi_1^{\pi^{sp}(X,x_0)}(X,x_0) = \pi_1^{gcov}(X,x_0)$.
\end{corollary}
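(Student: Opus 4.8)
The statement to prove is Corollary \ref{3.6}: under the hypotheses that $X$ is connected, locally path connected, and semilocally $H$-connected for $H=\pi^{gc}(X,x_0)$, we have $\pi_1^{Span}(X,x_0) = \pi_1^{\pi^{sp}(X,x_0)}(X,x_0) = \pi_1^{gcov}(X,x_0)$. The plan is to assemble this purely from the already-proved theorems, invoking the external fact cited just before the corollary (from \cite[Corollary 4.8]{A16}) that semilocal $\pi^{gc}(X,x_0)$-connectedness forces both (a) $X$ is coverable and (b) $\pi^{gc}(X,x_0)=\pi^{sp}(X,x_0)$.

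First I would record the two consequences of the hypothesis: set $H=\pi^{gc}(X,x_0)$; by the cited result $X$ is coverable and $H=\pi^{gc}(X,x_0)=\pi^{sp}(X,x_0)$. Then the first equality $\pi_1^{Span}(X,x_0)=\pi_1^{\pi^{sp}(X,x_0)}(X,x_0)$ is immediate from Theorem \ref{3.1}(ii), which says precisely that this equality holds if and only if $X$ is coverable. So that half costs nothing beyond citing coverability.

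Next I would handle the second equality $\pi_1^{\pi^{sp}(X,x_0)}(X,x_0)=\pi_1^{gcov}(X,x_0)$. By Theorem \ref{3.4}, $\pi_1^{\pi^{gc}(X,x_0)}(X,x_0)=\pi_1^{gcov}(X,x_0)$ with no extra hypotheses. Since $\pi^{sp}(X,x_0)=\pi^{gc}(X,x_0)$ by the above, we get $\pi_1^{\pi^{sp}(X,x_0)}(X,x_0)=\pi_1^{\pi^{gc}(X,x_0)}(X,x_0)=\pi_1^{gcov}(X,x_0)$. Chaining the two equalities yields the corollary. One could alternatively route the second equality through Corollary \ref{3.5} together with $\widetilde{\pi}^{sp}=\pi^{sp}=\pi^{gc}$, but the direct appeal to Theorem \ref{3.4} is cleaner.

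The only genuine content here is the cited implication that semilocal $\pi^{gc}$-connectedness gives coverability and $\pi^{gc}=\pi^{sp}$; everything else is bookkeeping with Theorems \ref{3.1}(ii) and \ref{3.4}. So there is no real obstacle — the main thing to be careful about is making sure the hypotheses of Theorem \ref{3.1}(ii) (just coverability) and the cited lemma from \cite{A16} (connected, locally path connected, semilocally $H$-connected) are all in force, which they are by assumption. If one wanted a self-contained argument one would have to reprove $\pi^{gc}=\pi^{sp}$ under semilocal $H$-connectedness, which is where the actual work lives, but since that is an earlier/cited result we may simply invoke it.
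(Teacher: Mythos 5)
Your proposal is correct and follows the same route the paper intends: the remark immediately preceding the corollary supplies exactly the two facts you invoke (coverability and $\pi^{gc}(X,x_0)=\pi^{sp}(X,x_0)$ from \cite[Corollary 4.8]{A16}), after which Theorem \ref{3.1}(ii) gives the first equality and Theorem \ref{3.4} gives the second. No gaps.
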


\begin{remark}\label{3.7}
Fischer and Zastrow \cite[Lemma 2.1]{FZ7} showed that the whisker topology is finer than the compact-open quotient topology on the universal path space $ \widetilde{X} $ for any space $ X $. Clearly, the result will hold for the fundamental group $\pi_{1}(X,x_{0})$ as a subspace of $ \widetilde{X} $ i.e $\pi_1^{qtop}(X,x_0) \preccurlyeq \pi_1^{wh}(X,x_0)$. If $X$  is locally path connected, then the equality holds if and only if $X$ is SLT at $x_0$ (see \cite[Corollary 3.3]{ PA17}). The strict inequality holds for the Hawaiian Earring $\mathbb{HE}$ (see \cite[Example 3.25]{A20}).\\
\end{remark}



In the following theorem, we compare the whisker and compact-open quotient topologies on the fundamental group with the subgroup topology induced by a subgroup $H$ under a semilocally connectedness condition.

\begin{theorem}\label{3.8}
Let $(X,x_0)$ be a pointed topological space and $H\leqslant \pi_1(X,x_0)$. Then $X$ is semilocally $H$-connected at $x_0$ if and only if $\pi_1^H(X,x_0) \preccurlyeq \pi_1^{qtop}(X,x_0) \preccurlyeq \pi_1^{wh}(X,x_0)$.
\end{theorem}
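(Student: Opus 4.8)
The plan is to recast the chain of topologies as a statement about open subgroups. First, $\pi_1^H(X,x_0)$ is the subgroup topology determined by $\Sigma^H$, whose least member is $H$, so its open sets are precisely the unions of left cosets of $H$. Since $\pi_1^{qtop}(X,x_0)$ is a quasitopological group (see \cite{B13}), and since in $\pi_1^{wh}(X,x_0)$ a basic whisker neighbourhood of a class $[\gamma]\in p^{-1}(x_0)$ is the left translate $[\gamma]\cdot\bigl(N([c_{x_0}],U)\cap p^{-1}(x_0)\bigr)$, left translations are self-homeomorphisms in each of these two topologies. Hence $\pi_1^H(X,x_0)\preccurlyeq\pi_1^{qtop}(X,x_0)$ if and only if $H$ is open in $\pi_1^{qtop}(X,x_0)$, and $\pi_1^H(X,x_0)\preccurlyeq\pi_1^{wh}(X,x_0)$ if and only if $H$ is open in $\pi_1^{wh}(X,x_0)$. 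Moreover $\pi_1^{qtop}(X,x_0)\preccurlyeq\pi_1^{wh}(X,x_0)$ always holds by Remark~\ref{3.7}, and a basic neighbourhood of the identity $[c_{x_0}]$ in $\pi_1^{wh}(X,x_0)$ is $N([c_{x_0}],U)\cap p^{-1}(x_0)=i_*\pi_1(U,x_0)$ for $U$ an open neighbourhood of $x_0$; therefore $H$ is open in $\pi_1^{wh}(X,x_0)$ if and only if $i_*\pi_1(U,x_0)\subseteq H$ for some such $U$, i.e.\ if and only if $X$ is semilocally $H$-connected at $x_0$.

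With these observations the backward implication is immediate: if $\pi_1^H(X,x_0)\preccurlyeq\pi_1^{qtop}(X,x_0)\preccurlyeq\pi_1^{wh}(X,x_0)$, then $\pi_1^H(X,x_0)\preccurlyeq\pi_1^{wh}(X,x_0)$, so $H$ is open in $\pi_1^{wh}(X,x_0)$, and hence $X$ is semilocally $H$-connected at $x_0$.

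For the forward implication, assume $X$ is semilocally $H$-connected at $x_0$ and fix an open $U\ni x_0$ with $i_*\pi_1(U,x_0)\subseteq H$. By Theorem~\ref{2.5} one has $\pi_1^H(X,x_0)\preccurlyeq\pi_1^{i_*\pi_1(U,x_0)}(X,x_0)$, so it is enough to show that $i_*\pi_1(U,x_0)$ is open in $\pi_1^{qtop}(X,x_0)$; given that, every left coset of $i_*\pi_1(U,x_0)$, and hence every subgroup of $\pi_1(X,x_0)$ containing it, is open in $\pi_1^{qtop}(X,x_0)$, which together with $\pi_1^{qtop}(X,x_0)\preccurlyeq\pi_1^{wh}(X,x_0)$ gives the desired chain. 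To prove openness of $i_*\pi_1(U,x_0)$ I would pass to $\Omega(X,x_0)$: the set $\Omega(U,x_0)=\langle [0,1],U\rangle$ is a subbasic open subset of $\Omega(X,x_0)$ with $q(\Omega(U,x_0))=i_*\pi_1(U,x_0)$, so the task reduces to showing that its $q$-saturation
\[ q^{-1}\bigl(i_*\pi_1(U,x_0)\bigr)=\{\,\gamma\in\Omega(X,x_0):\gamma\text{ is path-homotopic to a loop contained in }U\,\} \]
is open in $\Omega(X,x_0)$; the idea is that a sufficiently fine compact-open neighbourhood of such a $\gamma$ constrains nearby loops strongly enough that their homotopy classes cannot leave $i_*\pi_1(U,x_0)$.

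I expect this last step to be the main obstacle. Unlike the whisker topology, the quotient topology underlying $\pi_1^{qtop}(X,x_0)$ is not generated by ``local whiskers at $x_0$'', so the openness of $i_*\pi_1(U,x_0)$ in it does not follow formally from semilocal $H$-connectedness at $x_0$: it is genuinely a claim that a particular saturated subset of $\Omega(X,x_0)$ is open in the compact-open topology, and pinning down exactly which loops are compact-open close to a given $\gamma$ — and why their homotopy classes must stay inside $i_*\pi_1(U,x_0)$ — is the delicate part of the argument that I would check most carefully.
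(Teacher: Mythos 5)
Your backward implication and your reduction of each inequality to the openness of $H$ are correct and essentially match the paper: left translations are homeomorphisms in both $\pi_1^{qtop}(X,x_0)$ and $\pi_1^{wh}(X,x_0)$, and $N([c_{x_0}],U)\cap p^{-1}(x_0)=i_*\pi_1(U,x_0)$, so $\pi_1^H(X,x_0)\preccurlyeq\pi_1^{wh}(X,x_0)$ is equivalent to semilocal $H$-connectedness at $x_0$ (this is \cite[Theorem 4.2]{A16}, which the paper also invokes). The genuine gap is exactly the step you flagged and did not carry out: in the forward direction you must show that $i_*\pi_1(U,x_0)$ (equivalently $H$) is open in $\pi_1^{qtop}(X,x_0)$, and you only reduce this to the openness of the saturation $q^{-1}\bigl(i_*\pi_1(U,x_0)\bigr)$ in $\Omega(X,x_0)$ without proving it. Your suspicion that this cannot be extracted from the stated hypothesis is justified; in fact the implication fails for a general pointed space. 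By Remark \ref{3.13}(ii) every nonempty open subset of $\pi_1^{qtop}(X,x_0)$ is a union of cosets of $\pi^{sg}(X,x_0)$, so for the Harmonic Archipelago at a noncanonical basepoint $b$, where $\pi^{sg}(\mathbb{HA},b)=\pi_1(\mathbb{HA},b)$, the group $\pi_1^{qtop}(\mathbb{HA},b)$ is indiscrete; choosing $U\ni b$ with $i_*\pi_1(U,b)$ a proper subgroup (possible since $\pi_1^{wh}(\mathbb{HA},b)$ is not indiscrete) and taking $H=i_*\pi_1(U,b)$ yields a space that is semilocally $H$-connected at $b$ while $H$ cannot be open in $\pi_1^{qtop}(\mathbb{HA},b)$.

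The paper closes this step not by a compact-open argument on $\Omega(X,x_0)$ but by quietly strengthening the hypotheses: it cites \cite[Corollary 4.9]{A16} (for connected, locally path connected $X$, semilocally $H$-connected implies semilocally path $H$-connected) and \cite[Proposition 4.3]{A16} (semilocally path $H$-connected implies $H$ is open in $\pi_1^{qtop}(X,x_0)$), thereby using that $X$ is connected, locally path connected and semilocally $H$-connected at every point, not merely at $x_0$. So to finish your argument you would have to import those global hypotheses and the path-open-cover machinery of \cite{A16}; no estimate based only on loops and neighbourhoods near $x_0$ can succeed, because openness in the coarser topology $\pi_1^{qtop}$ is a strictly stronger demand than openness in $\pi_1^{wh}$.
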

\begin{proof}
Since $X$ is semilocally $H$-connected at $x_0$, there is an open neighbourhood $U$ of $x_0$ such that
\[i_*\pi_1(U,x_0)\subseteq H\]
Thus for any $K\in \Sigma^H$ we have $i_\ast (U,x_0)\subseteq K$.
By \cite[Theorem 4.2]{A16} $K$ is open in $\pi_1^{wh}(X,x_0)$. Hence $\pi_1^H(X,x_0)\preccurlyeq \pi_1^{wh}(X,x_0)$.

Since $X$ is connected, locally path connected and semilocally H-connected, by \cite[Corollary 4.9]{A16} X is semilocally path $H$-connected and so by \cite[Proposition 4.3]{A16} $H$ is open in $\pi_1^{qtop}(X,x_0)$. Therefore $K$ is open in $\pi_1^{qtop}(X,x_0)$ for all $K\in \Sigma^H$ and hence $\pi_1^H(X,x_0)\preccurlyeq \pi_1^{qtop}(X,x_0)$.

Conversely, since $\pi_1^H(X,x_0)\preccurlyeq \pi_1^{wh}(X,x_0)$, $H$ is open in $\pi_1^{wh}(X,x_0)$ and by \cite[Theorem 4.2]{A16} $X$ is semilocally $H$-connected at $x_0$.
\end{proof}

It is known that a connected, locally path connected space $X$ is semilocally $H$-connected if and only if $H$ is a covering subgroup of $\pi_1(X,x_0)$ (see \cite[Proposition 4.6]{A16}). Therefore, the following corollary is a consequence of this fact and Theorem \ref{3.8}.

\begin{corollary}\label{3.9}
Let X be a connected, locally path connected space and H be a subgroup of $\pi_1(X,x_0)$, then $\pi_1^H(X,x_0) \preccurlyeq \pi_1^{qtop}(X,x_0) \preccurlyeq \pi_1^{wh}(X,x_0)$ if and only if H is a covering subgroup.
 \end{corollary}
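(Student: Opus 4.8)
The plan is to combine Theorem~\ref{3.8} with the characterization of covering subgroups via semilocal $H$-connectedness. First I would recall the standing hypothesis: $X$ is connected and locally path connected, and $H \leqslant \pi_1(X,x_0)$. The goal is to show the equivalence
\[
\pi_1^H(X,x_0) \preccurlyeq \pi_1^{qtop}(X,x_0) \preccurlyeq \pi_1^{wh}(X,x_0) \iff H \text{ is a covering subgroup}.
\]

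The forward-and-backward argument runs entirely through two already-available facts. From \cite[Proposition 4.6]{A16} (quoted just before the statement), under the connected, locally path connected hypothesis on $X$, the space $X$ is semilocally $H$-connected if and only if $H$ is a covering subgroup of $\pi_1(X,x_0)$. So it suffices to show that the chain of inequalities is equivalent to $X$ being semilocally $H$-connected. But that is exactly what Theorem~\ref{3.8} gives us, provided we are careful about the difference between ``semilocally $H$-connected at $x_0$'' and ``semilocally $H$-connected'': Theorem~\ref{3.8} is phrased with the at-$x_0$ version, whereas Proposition~4.6 of \cite{A16} uses the global version. The resolution is that for connected, locally path connected $X$, one can upgrade the local statement to the global one along any path (this is the content of \cite[Corollary 4.9]{A16} invoked inside the proof of Theorem~\ref{3.8}), so in this setting the two notions coincide; alternatively, one simply notes that the proof of Theorem~\ref{3.8} already uses the connected, locally path connected hypotheses to pass between the two. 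Thus: if the chain holds, then $H$ is open in $\pi_1^{wh}(X,x_0)$, hence $X$ is semilocally $H$-connected at $x_0$ by Theorem~\ref{3.8}, hence (by local path connectedness and connectedness) semilocally $H$-connected, hence $H$ is a covering subgroup by \cite[Proposition 4.6]{A16}. Conversely, if $H$ is a covering subgroup, then $X$ is semilocally $H$-connected by the same proposition, so in particular semilocally $H$-connected at $x_0$, and Theorem~\ref{3.8} delivers the chain.

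The main obstacle, such as it is, is purely bookkeeping: making sure the ``at $x_0$'' versus global distinction is handled cleanly so that the biconditional of Theorem~\ref{3.8} and the biconditional of Proposition~4.6 of \cite{A16} can be chained without a gap. Since both results are stated (or proved) under the same connected, locally path connected umbrella, and since \cite[Corollary 4.9]{A16} bridges the two connectedness notions in precisely this setting, no genuine new work is needed — the corollary is a direct splice of the two biconditionals. I would write the proof in two or three sentences: cite Theorem~\ref{3.8} for the equivalence with semilocal $H$-connectedness at $x_0$, cite \cite[Corollary 4.9]{A16} (or note the equivalence is built into Theorem~\ref{3.8}'s proof) to match it with global semilocal $H$-connectedness, and cite \cite[Proposition 4.6]{A16} for the equivalence of the latter with $H$ being a covering subgroup.
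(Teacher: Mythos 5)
Your route is the same as the paper's: Corollary \ref{3.9} is presented there with no separate proof, as an immediate splice of Theorem \ref{3.8} with \cite[Proposition 4.6]{A16}, and your write-up is a faithful expansion of exactly that splice. The useful thing you add is that you explicitly notice the mismatch the paper glosses over, namely that Theorem \ref{3.8} is stated in terms of ``semilocally $H$-connected \emph{at} $x_0$'' while \cite[Proposition 4.6]{A16} characterizes covering subgroups via the \emph{global} notion of semilocal $H$-connectedness.

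However, your proposed resolution of that mismatch is where the argument breaks. The claim that for connected, locally path connected $X$ the pointwise and global notions coincide is false, and \cite[Corollary 4.9]{A16} does not say this: as used in the proof of Theorem \ref{3.8}, that corollary passes from (global) semilocal $H$-connectedness to semilocal \emph{path} $H$-connectedness, which is a different strengthening, not an upgrade from ``at $x_0$'' to ``at every point along every path.'' For a concrete failure, take $X=\mathbb{HE}$ based at a point $b$ lying on one of the circles but different from the wedge point, and $H=1$: a small open arc around $b$ is simply connected, so $X$ is semilocally $1$-connected at $b$, yet $X$ is not semilocally $1$-connected globally (every neighbourhood of the wedge point carries essential loops) and $1$ is not a covering subgroup of $\pi_1(\mathbb{HE},b)$. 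So the direction ``chain $\Rightarrow$ covering subgroup'' cannot be obtained by promoting the pointwise conclusion of Theorem \ref{3.8} to the global hypothesis of \cite[Proposition 4.6]{A16}. To be fair, this gap is inherited from the paper itself: the proof of Theorem \ref{3.8} silently substitutes the global notion for the pointwise one when it invokes \cite[Corollary 4.9]{A16} and \cite[Proposition 4.3]{A16}. A repair would have to argue directly with openness of $H$ in $\pi_1^{qtop}(X,x_0)$ (which by \cite{B14} characterizes \emph{semicovering} subgroups, so one must still explain why a covering, rather than merely a semicovering, subgroup is obtained), or restate both the theorem and the corollary with the global connectedness hypothesis throughout.
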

 
Let $COV( X )$ and $GCOV( X )$ denote the category of all coverings and generalized coverings of $X$, respectively. Then for a connected, locally path connected space $X$, the categorical equality $GCOV( X ) = COV( X )$ holds if and only if $X$ is semilocally $\pi^{gc}(X,x_0)$-connected (see \cite[Corollary 4.4]{A16}). Therefore the following corollary is a consequence of these facts and Corollary \ref{3.9}.

\begin{corollary}\label{3.10}
 Let $X$ be a connected and locally path connected space. Then\\
$(i)$ $\pi_1^{gcov}(X,x_0)=\pi_1^{qtop}(X,x_0) \preccurlyeq \pi_1^{wh}(X,x_0)$ if and only if $X$ is semilocally $\pi^{gc}(X,x_0)$-connected.\\
$(ii)$ $\pi_1^{gcov}(X,x_0)=\pi_1^{qtop}(X,x_0) \preccurlyeq \pi_1^{wh}(X,x_0)$ if and only if $GCOV (X) = COV (X)$.
 \end{corollary}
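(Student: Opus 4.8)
The plan is to derive Corollary \ref{3.10} directly from Corollary \ref{3.9}. Recall that Corollary \ref{3.9} states, for a connected and locally path connected space $X$ and a subgroup $H$ of $\pi_1(X,x_0)$, that $\pi_1^H(X,x_0) \preccurlyeq \pi_1^{qtop}(X,x_0) \preccurlyeq \pi_1^{wh}(X,x_0)$ holds if and only if $H$ is a covering subgroup. First I would specialize this to $H = \pi^{gc}(X,x_0)$. By Theorem \ref{3.4}, for this choice of $H$ we have $\pi_1^H(X,x_0) = \pi_1^{gcov}(X,x_0)$, so the left-hand side of the chain becomes $\pi_1^{gcov}(X,x_0)$, and Corollary \ref{3.9} gives that $\pi_1^{gcov}(X,x_0) \preccurlyeq \pi_1^{qtop}(X,x_0) \preccurlyeq \pi_1^{wh}(X,x_0)$ holds if and only if $\pi^{gc}(X,x_0)$ is a covering subgroup.

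Next I would upgrade this inclusion of topologies to the asserted equality $\pi_1^{gcov}(X,x_0) = \pi_1^{qtop}(X,x_0)$. The point is that the comparison $\pi_1^{qtop}(X,x_0) \preccurlyeq \pi_1^{gcov}(X,x_0)$ is already available for connected, locally path connected spaces from \cite[Proposition 3.24]{A20} (cited in the introduction). Combining this with $\pi_1^{gcov}(X,x_0) \preccurlyeq \pi_1^{qtop}(X,x_0)$ forces equality. So the statement of part $(i)$ reduces to: the equality $\pi_1^{gcov}(X,x_0) = \pi_1^{qtop}(X,x_0)$ together with $\pi_1^{qtop}(X,x_0) \preccurlyeq \pi_1^{wh}(X,x_0)$ (the latter being automatic by Remark \ref{3.7}) holds if and only if $\pi^{gc}(X,x_0)$ is a covering subgroup. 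It then remains to translate the condition ``$\pi^{gc}(X,x_0)$ is a covering subgroup'' into ``$X$ is semilocally $\pi^{gc}(X,x_0)$-connected.'' For this I would invoke \cite[Proposition 4.6]{A16}, which (as quoted just before Corollary \ref{3.9}) says that for a connected, locally path connected $X$, the space is semilocally $H$-connected if and only if $H$ is a covering subgroup; applying it with $H = \pi^{gc}(X,x_0)$ finishes part $(i)$.

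For part $(ii)$ I would simply chain part $(i)$ with the categorical fact quoted immediately before the corollary: for connected, locally path connected $X$, the equality $GCOV(X) = COV(X)$ holds if and only if $X$ is semilocally $\pi^{gc}(X,x_0)$-connected (this is \cite[Corollary 4.4]{A16}). Since part $(i)$ shows the topological statement is equivalent to semilocal $\pi^{gc}(X,x_0)$-connectedness, transitivity of ``if and only if'' yields part $(ii)$.

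I do not expect any serious obstacle here, since every ingredient is either a previously stated result (Theorem \ref{3.4}, Corollary \ref{3.9}, Remark \ref{3.7}) or a cited external fact. The only point requiring a little care is making sure the direction ``$\pi_1^{gcov} \preccurlyeq \pi_1^{qtop}$'' is genuinely unconditional on connected, locally path connected spaces so that the one-sided comparison coming out of Corollary \ref{3.9} can be promoted to equality; this is exactly \cite[Proposition 3.24]{A20}, so the argument goes through cleanly.
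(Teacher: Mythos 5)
Your derivation is correct and follows exactly the route the paper intends: the paper states Corollary \ref{3.10} as an immediate consequence of Corollary \ref{3.9} (specialized to $H=\pi^{gc}(X,x_0)$ via Theorem \ref{3.4}), \cite[Proposition 4.6]{A16}, and \cite[Corollary 4.4]{A16}, with the upgrade to equality coming from \cite[Proposition 3.24]{A20}. Your write-up simply fills in these details faithfully.
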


It is known that $\pi_1^{wh}(X,x_0)$ is a subgroup topology with respect to $\Sigma^{wh} =$\\ 
$ \{i_\ast \pi_1(U,x_0) | \text{U is an open neighborhood of X at}\ x_0\}$ (see \cite[p. 945]{A20}).
It implies from \cite[Proposition 3.8]{A16} that the infinitesimal subgroup of $\pi_1^{wh}(X,x_0)$ is $\pi^s(X,x_0)$.  By \cite[Proposition 3.20]{A20}, $X$ is semilocally $\pi^s(X,x_0)$-connected at $x_0$ if and only if $\pi^s(X,x_0)$ is open in $\pi_1^{wh}(X,x_0)$. Therefore we have the following result. 

\begin{corollary}\label{3.11}
Let $(X,x_0)$ be a pointed topological space, then 
\[\pi_1^{wh}(X,x_0 \preccurlyeq \pi_1^{\pi^s(X,x_0)}(X,x_0).\]
Also, $X$ is semilocally $\pi^s(X,x_0)$-connected at $x_0$ if and only if
\[\pi_1^{qtop}(X,x_0) =\pi_1^{wh}(X,x_0)= \pi_1^{\pi^s(X,x_0)}(X,x_0).\]
Moreover, if $\pi_1^{wh}(X,x_0)=\pi_1^K(X,x_0)$ for a subgroup $K$ of $\pi_1(X,x_0)$, then $K=\pi^{s}(X,x_0)$.
\end{corollary}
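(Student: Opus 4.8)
The plan is to obtain all three assertions from the structural results of Section~\ref{Sec2} together with Theorem~\ref{3.8}, using only the two facts recorded immediately above the statement: that $\pi_1^{wh}(X,x_0)$ is the subgroup topology determined by the neighbourhood family $\Sigma^{wh}=\{i_\ast\pi_1(U,x_0)\mid U\text{ an open neighbourhood of }x_0\text{ in }X\}$, and that its infinitesimal subgroup equals $\pi^s(X,x_0)$, so that $\pi^s(X,x_0)=\bigcap_U i_\ast\pi_1(U,x_0)$. I also use, as noted in Section~\ref{Sec2}, that the infinitesimal subgroup of $\Sigma^K$ is $K$ for every subgroup $K\leqslant\pi_1(X,x_0)$.

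For the first inequality I would argue directly. A basic open set of $\pi_1^{wh}(X,x_0)$ is a left coset $[\alpha]\,i_\ast\pi_1(U,x_0)$, and since $\pi^s(X,x_0)\subseteq i_\ast\pi_1(U,x_0)$ we have $i_\ast\pi_1(U,x_0)\in\Sigma^{\pi^s(X,x_0)}$; hence this coset is already a basic open set of $\pi_1^{\pi^s(X,x_0)}(X,x_0)$, which gives $\pi_1^{wh}(X,x_0)\preccurlyeq\pi_1^{\pi^s(X,x_0)}(X,x_0)$. Equivalently, this is the instance of Theorem~\ref{2.5} with $\Sigma=\Sigma^{\pi^s(X,x_0)}$ and $\Sigma'=\Sigma^{wh}$, valid because $S_\Sigma=S_{\Sigma'}=\pi^s(X,x_0)$ and $\pi^s(X,x_0)\in\Sigma^{\pi^s(X,x_0)}$.

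For the equivalence I would specialize Theorem~\ref{3.8} to $H=\pi^s(X,x_0)$: the space $X$ is semilocally $\pi^s(X,x_0)$-connected at $x_0$ if and only if $\pi_1^{\pi^s(X,x_0)}(X,x_0)\preccurlyeq\pi_1^{qtop}(X,x_0)\preccurlyeq\pi_1^{wh}(X,x_0)$. In the forward direction, splicing this chain onto the first inequality produces the loop $\pi_1^{wh}(X,x_0)\preccurlyeq\pi_1^{\pi^s(X,x_0)}(X,x_0)\preccurlyeq\pi_1^{qtop}(X,x_0)\preccurlyeq\pi_1^{wh}(X,x_0)$, so all three topologies coincide. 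Conversely, the triple equality trivially gives $\pi_1^{\pi^s(X,x_0)}(X,x_0)\preccurlyeq\pi_1^{qtop}(X,x_0)\preccurlyeq\pi_1^{wh}(X,x_0)$, and Theorem~\ref{3.8} returns the semilocal connectedness. (An alternative route for the forward direction uses the cited equivalence between semilocal $\pi^s(X,x_0)$-connectedness at $x_0$ and openness of $\pi^s(X,x_0)$ in $\pi_1^{wh}(X,x_0)$: when $\pi^s(X,x_0)$ is open, each member of $\Sigma^{\pi^s(X,x_0)}$ is a union of left cosets of the open subgroup $\pi^s(X,x_0)$ and hence open in $\pi_1^{wh}(X,x_0)$, so $\pi_1^{\pi^s(X,x_0)}(X,x_0)\preccurlyeq\pi_1^{wh}(X,x_0)$ and, with the first inequality, equality of those two; the equality with $\pi_1^{qtop}(X,x_0)$ still requires Theorem~\ref{3.8}.)

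For the last assertion I would apply Theorem~\ref{2.4} to the neighbourhood families $\Sigma^{wh}$ and $\Sigma^K$: from $\pi_1^{wh}(X,x_0)=\pi_1^K(X,x_0)$ it follows that $S_{\Sigma^{wh}}=S_{\Sigma^K}$, that is $\pi^s(X,x_0)=K$. I do not expect a genuine obstacle here; the only care needed is in tracking the direction of $\preccurlyeq$ against the inclusions of neighbourhood families and in correctly identifying the infinitesimal subgroups, and the two substantive inputs are the cited description of $\Sigma^{wh}$ with infinitesimal subgroup $\pi^s(X,x_0)$ and Theorem~\ref{3.8}.
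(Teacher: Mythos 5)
Your proposal is correct and follows essentially the same route the paper intends: the identification of $\pi_1^{wh}(X,x_0)$ as the subgroup topology for $\Sigma^{wh}$ with infinitesimal subgroup $\pi^s(X,x_0)$ gives the first inequality via Theorem \ref{2.5} (or the direct coset argument), Theorem \ref{3.8} with $H=\pi^s(X,x_0)$ combined with that inequality yields the equivalence, and Theorem \ref{2.4} gives the uniqueness statement. The paper states the corollary without an explicit proof, but the ingredients it cites immediately beforehand are exactly the ones you use.
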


The following example shows that the condition semilocally $\pi^s(X,x_0)$-connectedness cannot be omitted for the equality in Corollary \ref{3.11}.
 
\begin{example}\label{3.12}
Let $\mathbb{HE}$ be the Hawaiian earring space, then by \cite{V} $\pi^s(\mathbb{HE},0) = 1$. Therefore by Theorem \ref{2.1} $\pi_1^{\pi^s(\mathbb{HE},0)}(\mathbb{HE},0)$ is discrete. By \cite[Remark 3.19]{A20} $\pi_1^{wh}(\mathbb{HE},0)$ is not discrete. Hence $ \pi_1^{wh}(\mathbb{HE},0) \prec \pi_1^{\pi^s(\mathbb{HE},0)}(\mathbb{HE},0)$.
\end{example}

\begin{remark}\label{3.13} $(i)$ An Alexandroff space is a topological space such that for every $x\in X$ there exists an open neighbourhood $W_x$ such that for each open neighbourhood $U$ of $x$ we have $W_X\subseteq U$. Let $X$ be an Alexandroff space and $W_{x_0}$ be the smallest open neighbourhood of $x_0$. Then by Corollary \ref{3.11} $\pi_1^H(X,x_0)=\pi_1^{wh}(X,x_0)$, where $H=i_*\pi_1(W_{x_0},x_0)=\pi^{s}(X,x_0)$ .\\
$(ii)$ It is known that every nonempty open or closed subset of $\pi_1^{qtop}(X,x_0)$ is a disjoint union of some cosets of $\pi^{sg}(X,x_0)$ (see \cite{TPM}). Therefore if $H=\pi^{sg}(X,x_0)$, then $\pi_1^{qtop}(X,x_0) \preccurlyeq \pi_1^H(X,x_0)$. Note that by Theorem \ref{3.8} if $X$ is semilocally $\pi^{sg}(X,x_0)$-connected at $x_0$, then $\pi_1^{qtop}(X,x_0)= \pi_1^{\pi^{sg}(X,x_0)}(X,x_0)$.\\
$(iii)$ By Theorem \ref{2.5} and $\pi^s(X,x_0) \leq \pi^{sg}(X,x_0)$ we have $\pi_1^{\pi^{sg}(X,x_0)}(X,x_0) \preccurlyeq $\\ $\pi_1^{\pi^s(X,x_0)}(X,x_0)$. The equality holds if and only if $\pi^s(X,x_0)= \pi^{sg}(X,x_0)$ by Theorem \ref{2.4}. The strict inequality holds for $\mathbb{HA}$ at $b\neq 0$ since $\pi^{s}(\mathbb{HA},b)=1$ and $\pi^{sg}(\mathbb{HA},b)=\pi_1(\mathbb{HA},b)$ (\cite [Example 3.12]{A16}).\\
$(iv)$ Consider $\mathbb{HE}$ the Hawaiian Earring space and $\mathbb{HA}$ the Harmonic Archipelago space. Then by \cite[Example 3.25]{A20} $\pi_1^{wh}(\mathbb{HE},0) \preccurlyeq \pi_1^{gcov}(\mathbb{HE},0)$ and by \cite[Example 3.26]{A20} $\pi_1^{gcov}(\mathbb{HA},b) \preccurlyeq \pi_1^{wh}(\mathbb{HA},b)$ where $b \in \mathbb{HA}$ is a noncanonical based point. Therefore, the whisker topology and the generalized covering topology are not comparable, in general.\\
$(v)$ It is proved in \cite[Proposition 3.24]{A20} that if $X$ is connected and locally path connected, then 
$\pi_1^{qtop}(X, x_0)  \preccurlyeq \pi_1^{gcov}(X, x_0)$.
 The strict inequality holds for the Hawaiian Earring $\mathbb{HE}$ (see \cite[Example 3.25]{A20}). The equality holds for any semilocally  $\pi^{gc}(X,x_0)$-connected space (see Theorem \ref{3.8}).\\
\end{remark}

Finally, we summarize all results of this section in order to compare various topologies on the fundamental group in the following diagram
(note that $A \longrightarrow B$ means that $A\preccurlyeq B$).
\[ \scalebox{0.8} { \xymatrix{
&\pi_1^{\pi^s(X,x_0)}(X,x_0)&\\
&&\pi_1^{\pi^{sg}(X,x_0)}(X,x_0) \ar[lu]^{(2)}\\
&\pi_1^{wh}(X,x_0) \ar[uu]^{(1)} \ar@{.}[rd]^{(4)}&\\
\pi_1^{qtop}(X,x_0) \ar[ru]^{(5)} \ar[rr]^{(6)}&&\pi_1^{gcov}(X,x_0)=\pi_1^{\pi^{gc}(X,x_0)}(X,x_0) \ar[uu]^{(3)} \ar@{.}[lu] \\
&&&\\
\pi_1^{Tau}(X,x_0) \ar[uu]^{(8)}&&\pi_1^{\widetilde{\pi}^{sp}(X,x_0)}(X,x_0) \ar[uu]^{(7)}\\
&\pi_1^{pSpan}(X,x_0) \ar[lu]^{(9)} \ar[ru]^{(10)}&\\
&&&\\
&\pi_1^{\pi^{sp}(X,x_0)}(X,x_0) \ar[uu]^{(11)}&\\
&&&\\
&\pi_1^{lasso}(X,x_0)=\pi_1^{Span}(X,x_0)\ar[uu]^{(12)}&\pi_1^{\prod^{sp}(X,x_0)}(X,x_0)\ar[luu]^{(15)}\\
&&&\\
&\pi_1^{tSpan}(X,x_0) \ar[uu]^{(13)}\ar[ruu]^{(16)}&\\
&&&\\
&\pi_1^{sh}(X,x_0)\ar[uu]^{(14)}&\\
 }}\]
In the following, according to the enumeration in the above diagram, we give references and complementary notes for each arrow.
\begin{enumerate}
 \item[(1)] See Corollary \ref{3.11}. The equality holds if and only if $X$ is semilocally $\pi^s(X,x_0)$-connected at $x_0$ (see \cite[Proposition 4.21]{Br12}). Since $\mathbb{HE}$ is not semilocally simply connected at $0$ and $\pi^{s}(\mathbb{HE},0)=1$, the strict inequality holds for $\mathbb{HE}$ at $0$ (see Example \ref{3.12}).
\item[(2)] By Theorem \ref{2.5} and $\pi^s(X,x_0) \leq \pi^{sg}(X,x_0)$. The equality holds if and only if $\pi^s(X,x_0)= \pi^{sg}(X,x_0)$ by Theorem \ref{2.4}. The strict inequality holds for $\mathbb{HA}$ at $b\neq 0$ since $\pi^{s}(\mathbb{HA},b)=1$ and $\pi^{sg}(\mathbb{HA},b)=\pi_1(\mathbb{HA},b)$ (\cite [Example 3.12]{A16}).
\item[(3)] By Theorem \ref{2.5} and $\pi^{sg}(X,x_0) \leq \pi^{gc}(X,x_0)$. The equality holds if and only if $\pi^{sg}(X,x_0)= \pi^{gc}(X,x_0)$ by Theorem \ref{2.4}. As an example $\pi^{sg}(\mathbb{HA},b)=\pi^{gc}(\mathbb{HA},b)$ (\cite [Example 3.12]{A16}).
The strict inequality holds for the space $RX$ in Example \ref{4.2} since $\pi^{sg}(RX,x_0)=1 \neq \pi^{gc}(RX,x_0)$.
\item[(4)] By \cite[Example 3.25]{A20} $\pi_1^{wh}(\mathbb{HE},0) \prec \pi_1^{gcov}(\mathbb{HE},0)$ and by \cite[Example 3.26]{A20} $\pi_1^{gcov}(\mathbb{HA},b) \prec \pi_1^{wh}(\mathbb{HA},b)$, where $b \in \mathbb{HA}$ is a non canonical based point. Thus $\pi_1^{wh}(X,x_0)$ and $\pi_1^{gcov}(X,x_0)$ are not comparable, in general. By Corollary \ref{3.10} if $X$ is connected and locally path connected, then $\pi_1^{gcov}(X,x_0) \preccurlyeq \pi_1^{wh}(X,x_0)$ if and only if $X$ is semilocally $\pi^{gc}(X,x_0)$-connected. The equality holds for any semilocally simply connected space.
\item[(5)] By \cite[Lemma 2.1]{FZ7}. If $X$  is locally path connected, then the equality holds if and only if $X$ is SLT at $x_0$ (see \cite[Corollary 3.3]{ PA17}). The strict inequality holds for $\mathbb{HE}$ (see \cite[Example 3.25]{A20}).
\item[(6)] See \cite[Proposition 3.24]{A20}. Note that $X$  is connected and locally path connected. The strict inequality holds for $\mathbb{HE}$ (see \cite[Example 3.25]{A20}). The equality holds if and only if $X$ is a semilocally  $\pi^{gc}(X,x_0)$-connected space (see Corollary \ref{3.10}).
\item[(7)] By Theorem \ref{3.4} and $\pi^{gc}(X,x_0) \leq \widetilde{\pi}^{sp}(X,x_0)$. By Theorems \ref{2.4} and \ref{2.5}, the equality holds if and only if $\pi^{gc}(X,x_0)= \widetilde{\pi}^{sp}(X,x_0)$.
\item[(8)] See \cite{B13}. The strict inequality holds for $\mathbb{HE}$ since $\pi_1^{qtop}(\mathbb{HE},0)$ is not a topological group by \cite{F11} and  $\pi_1^{\tau}(\mathbb{HE},0)$  is a topological group by \cite{B13}. The equality holds if and only if $\pi_1^{qtop}(X,x_0)$ is a topological group (see \cite{B13}).
\item[(9)] See \cite[Corollary 3.15]{A20}. Note that $X$  is locally path connected. By \cite[Theorem 3.16]{A20} the equality holds if $X$ is locally path connected and semilocally small generated.
\item[(10)] By Theorem \ref{3.2}. The equality holds if and only if $ \widetilde{\pi}^{sp}(X,x_0)$ is a semicovering subgroup (see Theorem \ref{3.2}). 
\item[(11)] By Theorem \ref{3.2} if $\pi^{sp}(X,x_0)$ is a semicovering subgroup. By Theorems \ref{2.4} and \ref{2.5}, the equality holds if and only if $\pi^{sp}(X,x_0)= \widetilde{\pi}^{sp}(X,x_0)$.
\item[(12)] By \cite[Theorem 3.5]{A20} and Theorem \ref{3.1}. Also, by Theorem \ref{3.1}, the equality holds if and only if $ \pi^{sp}(X,x_0)$ is a covering subgroup or equivalently $X$ is coverable. The strict inequality holds for $\mathbb{HE}$ since it is not coverable (see \cite[Theorem 2.8]{PTM17}). 
\item[(13)] By \cite[Proposition 3.5]{BF14}. The equality holds if $X$ is locally path connected, paracompact and Hausdorff (see  \cite[Theorem 7.6]{BF14}).
\item[(14)] By \cite[Proposition 5.8]{BF14}. The equality holds if $X$ is locally path connected, paracompact and Hausdorff (see  \cite[Theorem 7.6]{BF14}).
\item[(15)] By \cite[Proposition 3.5]{BF14} $\pi^{sp}(X,x_0)\leq \Pi^{sp}(X,x_0)$. The equality holds if $X$ is $T_1$ and paracompact (see  \cite[Theorem 3.13]{BF14}).
\item[(16)] Since $\Sigma^{\Pi^{sp}(X,x_0)}\subseteq \Sigma^{tSpan}$.
\end{enumerate} 

In order to investigate further the above diagram, we raise some questions in the following which we are interested in giving answers to them.\\
\begin{enumerate}
\item[(Q1)] Is there a necessary and sufficient condition on $X$ for the equality $\pi_1^{gcov}(X,x_0)=\pi_1^{wh}(X,x_0)$?
\item[(Q2)] Is there a space $X$ for which the strict inequality $\pi_1^{\widetilde{\pi}^{sp}(X,x_0)}(X,x_0) \prec \pi_1^{gcov}(X,x_0)$ holds?
\item[(Q3)] Is there a space $X$ for which the strict inequality $\pi_1^{pSpan}(X,x_0) \prec \pi_1^{\widetilde{\pi}^{sp}(X,x_0)}(X,x_0)$ holds?
\item[(Q4)] Is there a space $X$ for which the strict inequality $\pi_1^{\pi^{sp}(X,x_0)}(X,x_0) \prec \pi_1^{pSpan}(X,x_0)$ holds?
\item[(Q5)] Is there a necessary and sufficient condition on $X$ for the equality $\pi_1^{pSpan}(X,x_0)=\pi_1^{\tau}(X,x_0)$?
\item[(Q6)] Is there a space $X$ for which the strict inequality $\pi_1^{pSpan}(X,x_0) \prec \pi_1^{\tau}(X,x_0)$ holds?
\item[(Q7)] Is there a space $X$ for which the strict inequality $\pi_1^{tSpan}(X,x_0) \prec \pi_1^{Span}(X,x_0)$ holds?
\item[(Q8)] Is there a space $X$ for which the strict inequality $\pi_1^{tSpan}(X,x_0) \prec \pi_1^{{\Pi}^{sp}(X,x_0)}(X,x_0)$ holds?
\item[(Q9)] Is there a space $X$ for which the strict inequality $ \pi_1^{{\Pi}^{sp}(X,x_0)}(X,x_0) \prec \pi_1^{{\pi}^{sp}(X,x_0)}(X,x_0)$ holds?
\item[(Q10)] Is there a space $X$ for which the strict inequality $\pi_1^{sh}(X,x_0) \prec \pi_1^{tSpan}(X,x_0) $ holds?
\end{enumerate}

\section{Topologized Fundamental Groups as Topological Groups}

At the beginning of the 21st century, it was proved in a wrong way that the fundamental group with compact-open quotient topology is a topological group (see \cite{Biss,F9,F11,B11}). Later Fabel \cite{F9,F11} and Brazas \cite{B13} by giving some examples showed that $\pi_1^{qtop}(X,x_0)$ fails to be a topological group, in general. In fact they showed that the group multiplication in $\pi_1^{qtop}(X,x_0)$ is not necessarily continuous. Brazas \cite{B13} mentioned that $\pi_1^{qtop}(X,x_0)$ is a quasitopological group in the sense of \cite{Ar}, that is, a group with a topology such that inversion and all translations are continuous. 
Therefore, it seems interesting to find out when $\pi_1^{qtop}(X,x_0)$ is a topological group.
Calcut and McCarthy \cite{CM} proved that the topology of fundamental group of a locally path connected and semi-locally simply connected space is discrete and so this space has the quasitopological fundamental group as topological group. Also, Brazas \cite{B13} introduced a new topology on fundamental groups made them topological groups and denoted it by $\pi_1^{\tau}(X,x)$.  In fact, the topology of $\pi_1^{\tau}(X,x)$ is obtained by removing the smallest number of open sets from the topology of $\pi_1^{qtop}(X,x_0)$ in order to make it a topological group. Hence the topology of $\pi_1^{\tau}(X,x)$ is coarser than the topology of $\pi_1^{qtop}(X,x_0)$, and if $\pi_1^{qtop}(X,x_0)$ is a topological group, then $\pi_1^{qtop}(X,x_0)\cong\pi_1^{\tau}(X,x)$ as topological groups \cite{B13}. Moreover, Brazas and Fabel \cite{BF15}, Torabi et al. \cite{TPM}, Nasri et al. \cite{N} and Torabi \cite{T21} investigated some conditions under which $\pi_1^{qtop}(X,x_0)$ is a topological group.

For the fundamental group with the whisker topology $\pi_1^{wh}(X,x_0)$, it is known that $\pi_1^{wh}(X,x_0)$ is a homogeneous space and also a left topological group which is not a right topological group, in general. Moreover, the inversion in $\pi_1^{wh}(X,x_0)$ is not continuous, in general. In fact, the inversion in $\pi_1^{wh}(X,x_0)$ is continuous if and only if $\pi_1^{wh}(X,x_0)$ is a topological group. Note that since $\pi_1^{wh}(X,x_0)$ is a subgroup topology with respect to 
$\Sigma^{wh}$, $\pi_1^{wh}(X,x_0)$ is a topological group if $\pi_1(X,x_0)$ is an abelian group.
Furthermore, $\pi_1^{wh}(X,x_0)$ is discrete if and only if $X$ is semilocally simply connected at $x_0$ (see \cite{Br12,A16}.

Finally, note that the fundamental group with the lasso and Spanier topologies which are the same, $\pi_1^{\mathrm{lasso}} (X, x_0)=\pi_1^{\mathrm{Span}}(X, x_0)$ (see \cite[Proposition 3.5]{A20}) is a topological group since $\pi_1^{Span}(X, x_0)$ has the subgroup topology induced by Spanier subgroups $\pi(\mathcal{U}, x_0)$ which are normal subgroups of
$\pi_1(X,x_0)$. Moreover, by \cite[Proposition 3.13]{A20} $\pi_1^{\widetilde{\pi}^{sp}(X,x_0)}(X,x_0)$ is a topological group.
 Also, note that since $\pi^{sg}(X, x_0)$, $\pi^{sp}(X, x_0)$ and $\Pi^{sp}(X, x_0)$ are normal subgroups of $\pi_1(X,x_0)$, by Theorem \ref{2.3} $\pi_1^{\pi^{sg}(X,x_0)}(X,x_0)$, $\pi_1^{\pi^{sp}(X,x_0)}(X,x_0)$ and $\pi_1^{\Pi^{sp}(X,x_0)}(X,x_0)$ are topological groups. Moreover, since $ \check{\pi}_1(X,x_0)$ is a topological group, it follows that ${\pi}_1^{sh}(X,x_0)$ is a topological group (see \cite{BF24}).

Therefore, the question\emph{``What kind of topological structure are topologized fundamental groups?''} is still interesting.

In \cite{TPM} Torabi et al.  proved that if $X$ is semilocally $\pi^{sg}(X,x_0)$-connected, then $\pi_1^{qtop}(X,x_0)$ is a topological group. 
 In the following theorem, we give a weaker condition on $X$ under which $\pi_1^{qtop}(X,x_0)$ is a topological group.
\begin{theorem}\label{4.1}
Let $(X,x_0)$ be a pointed topological group and $H=\pi^{gc}(X,x_0)$ and X is semilocally $H$-connected, then $\pi_1^{qtop}(X,x_0)$ is a topological group.
\end{theorem}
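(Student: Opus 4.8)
The plan is to use Theorem~\ref{3.8} to first establish that the subgroup topology $\pi_1^H(X,x_0)$ with $H=\pi^{gc}(X,x_0)$ agrees with (or is comparable to) $\pi_1^{qtop}(X,x_0)$, and then to invoke Theorem~\ref{2.3} to conclude that a subgroup topology of the form $G^H$ is a topological group precisely when $H$ is normal. So the argument breaks into two independent pieces: (a) identifying $\pi_1^{qtop}(X,x_0)$ with $\pi_1^{\pi^{gc}(X,x_0)}(X,x_0)$ under the semilocal hypothesis, and (b) verifying that $\pi^{gc}(X,x_0)$ is a normal subgroup of $\pi_1(X,x_0)$.

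For step (a): since $X$ is semilocally $H$-connected with $H=\pi^{gc}(X,x_0)$, Theorem~\ref{3.8} gives $\pi_1^H(X,x_0)\preccurlyeq\pi_1^{qtop}(X,x_0)\preccurlyeq\pi_1^{wh}(X,x_0)$. For the reverse comparison I would use the known inequality $\pi_1^{qtop}(X,x_0)\preccurlyeq\pi_1^{gcov}(X,x_0)$ (this is \cite[Proposition 3.24]{A20}, valid because $X$ is connected and locally path connected — a pointed topological space here should be read as having these properties, or one supplies them), together with Theorem~\ref{3.4}, which identifies $\pi_1^{gcov}(X,x_0)=\pi_1^{\pi^{gc}(X,x_0)}(X,x_0)=\pi_1^H(X,x_0)$. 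Chaining these: $\pi_1^H(X,x_0)\preccurlyeq\pi_1^{qtop}(X,x_0)\preccurlyeq\pi_1^{gcov}(X,x_0)=\pi_1^H(X,x_0)$, forcing $\pi_1^{qtop}(X,x_0)=\pi_1^H(X,x_0)$ as topologies on $\pi_1(X,x_0)$. (Indeed, Corollary~\ref{3.10}(i) already records exactly this equality under the semilocal $\pi^{gc}$-connectedness hypothesis, so one may simply cite it.)

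For step (b): $\pi^{gc}(X,x_0)=\bigcap_{K\in\Sigma^{\mathrm{gcov}}}K$ is the intersection of all generalized covering subgroups. Generalized covering subgroups are conjugation-stable as a family — conjugating by a loop at $x_0$ permutes the fibers of a generalized covering, hence sends generalized covering subgroups to generalized covering subgroups — so their intersection $\pi^{gc}(X,x_0)$ is invariant under all such conjugations, i.e. it is normal in $\pi_1(X,x_0)$. (This normality is part of the framework of \cite{A16,B15}; it can be cited rather than reproven.) With $H=\pi^{gc}(X,x_0)$ normal, Theorem~\ref{2.3} (the ``in particular'' clause: $G^H$ is a topological group iff $H\trianglelefteq G$) applies to $G=\pi_1(X,x_0)$, giving that $\pi_1^H(X,x_0)$ is a topological group. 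Combining with step (a), $\pi_1^{qtop}(X,x_0)=\pi_1^H(X,x_0)$ is a topological group.

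I do not anticipate a serious obstacle; the proof is essentially an assembly of Theorems~\ref{2.3}, \ref{3.4}, \ref{3.8} and Corollary~\ref{3.10}. The only point requiring a little care is making sure the connectivity hypotheses needed for \cite[Proposition 3.24]{A20} and for Corollary~\ref{3.10} are in force — if the statement's ``pointed topological group'' is a typo for ``pointed topological space,'' one should either add ``connected, locally path connected'' to the hypotheses or note that these are standing assumptions; otherwise the chain of inequalities in step (a) needs those properties. The normality of $\pi^{gc}(X,x_0)$ is the other input that must be present, and it is standard in this setting.
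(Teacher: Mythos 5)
Your proposal is correct, and its first half matches the paper; the final step, however, takes a genuinely different route. For the identification $\pi_1^{qtop}(X,x_0)=\pi_1^{H}(X,x_0)$ with $H=\pi^{gc}(X,x_0)$ you argue exactly as in Corollary~\ref{3.10}(i) (Theorem~\ref{3.8} in one direction, \cite[Proposition 3.24]{A20} together with Theorem~\ref{3.4} in the other), which is also what the paper's proof relies on, though it leaves this identification implicit and only explicitly shows that $\pi_1^{gcov}(X,x_0)$ is a topological group. Where you diverge is in proving that this common topology is a topological group: you invoke normality of $\pi^{gc}(X,x_0)$ together with Theorem~\ref{2.3}, whereas the paper uses the semilocal $\pi^{gc}(X,x_0)$-connectedness once more to obtain $GCOV(X)=COV(X)$ via \cite[Corollary 4.7]{A16}, concludes $\pi_1^{gcov}(X,x_0)=\pi_1^{Span}(X,x_0)$, and then quotes the fact that $\pi_1^{Span}(X,x_0)$ is a topological group because Spanier subgroups are normal. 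Your route yields the stronger intermediate statement that $\pi_1^{\pi^{gc}(X,x_0)}(X,x_0)$ is \emph{always} a topological group, with no hypotheses on $X$, at the price of justifying the normality of $\pi^{gc}(X,x_0)$; that fact is true (conjugation by a loop corresponds to a change of basepoint in the fibre of the same generalized covering, so the family of generalized covering subgroups is conjugation-closed and its intersection is normal) but is nowhere stated in this paper, so the external citation you indicate is genuinely needed. Your caveat about the hypotheses is also well taken: ``pointed topological group'' is evidently a typo for ``pointed topological space,'' and both your argument and the paper's require $X$ to be connected and locally path connected.
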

\begin{proof}
Since $X$ is semilocally $\pi^{gc}(X,x_0)$-connected by \cite[Corollary 4.7]{A16} the equality $GCOV(X) = COV(X)$ holds. Therefore the two neighbourhood families of subgroups   $\Sigma^{gcov}$ and $\Sigma^{Span}$ are the same. Hence  $\pi_1^{gcov}(X,x_0) = \pi_1^{Span}(X,x_0)$. Since $\pi_1^{Span}(X,x_0)$ is a topological group the result holds.
\end{proof}

The following example shows that $ {\pi }^{sg}(X,x_0) $ may be a proper subgroup of $ {\pi }^{gc}(X,x_0)$. Hence it seems that the condition semilocally $\pi^{gc}(X,x_0)$-connectedness is weaker than the condition semilocally $\pi^{sg}(X,x_0)$-connectedness in \cite[Theorem 4.1]{TPM}
\begin{example}\label{4.2}
Consider the space $ RX $  described in \cite[Definition 7]{FZ13}. By \cite [Theorem 16]{FZ13} $ RX $ is a metric, path connected, locally path connected and homotopically Hausdorff space and so by \cite{FR,V} $ {\pi }^{sg}(RX,x_0)=1 $, which does not admit a generalized universal covering space, i.e, $ {\pi }^{gc}(RX,x_0)\neq1 $ (see \cite[Example 2.5]{A16}).
\end{example}

It is proved in \cite[Theorem 2.6]{A16} that if $(X,x_0)$ is a locally path connected space, then there exists the following chain of subgroups of ${\pi }_1(X,x_0)$.
\[\{e\}\leq {\pi }^s(X,x_0)\leq {\pi }^{sg}(X,x_0)\le \pi ^{gc}(X,x_0)\le \overline{{\ \pi }^{sg}(X,x_0)}\leq {\widetilde{\pi }}^{sp}(X,x_0)\leq {\pi }^{sp}(X,x_0). \]
Also, it is shown in \cite[Corollary 2.8]{TPM} that if $\ov{\pi^{sg}(X,x_0)}$ is a finite index subgroup of $\pi_1^{qtop}(X,x_0)$ and $\pi_1^{qtop}(X,x_0)$ is connected, then $\pi_1^{qtop}(X,x_0)$ is an indiscrete topological group. In the following theorem we are going to extend this result somehow.

\begin{theorem}\label{4.3}
If $[\pi_1(X,x_0):\pi^{gc}(X,x_0)]$ is countable, then $\pi_1^{gcov}(X,x_0)$ has countably many open sets and so it is second countable.
Moreover, if $X$ is connected, locally path connected and $[\pi_1(X,x_0):\pi^{gc}(X,x_0)]$ is finite, then $\pi_1^{qtop}(X,x_0)$ is a second countable topological group.
\end{theorem}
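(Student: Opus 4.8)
\textbf{Proof proposal for Theorem \ref{4.3}.}

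The plan is to split the statement into its two assertions and treat them in sequence. For the first assertion, I would start from the observation that $\pi_1^{gcov}(X,x_0)$ carries the subgroup topology determined by $\Sigma^{gcov}$, and by Theorem \ref{3.4} this topology coincides with $\pi_1^H(X,x_0)$ for $H=\pi^{gc}(X,x_0)$. Hence a basis for the open sets is given by the left cosets of subgroups $K$ with $H\subseteq K\leq \pi_1(X,x_0)$. The key step is a counting argument: if $[\pi_1(X,x_0):H]$ is countable, then $\pi_1(X,x_0)/H$ as a set is countable, and every subgroup $K$ with $H\subseteq K$ corresponds to a subset of $\pi_1(X,x_0)/H$ (namely $K/H$), so there are at most countably many such $K$; moreover each such $K$ has at most countably many left cosets, again because $[\pi_1(X,x_0):K]\leq[\pi_1(X,x_0):H]$ is countable. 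Therefore the basis $\{gK \mid H\subseteq K\leq\pi_1(X,x_0),\ g\in\pi_1(X,x_0)\}$ is a countable union of countable families, hence countable, and a space with a countable basis is second countable. (If one wants the stronger ``countably many open sets,'' note that every open set is a union of basic open sets, but there are only countably many basic opens, so at most $2^{\aleph_0}$ open sets in general — I would phrase the first assertion as ``second countable,'' matching the statement, and only claim countably many open sets when the basis is finite or the index is finite, which is the relevant case for the second half.)

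For the second assertion, assume $X$ is connected and locally path connected and $[\pi_1(X,x_0):\pi^{gc}(X,x_0)]$ is finite. Then by Theorem \ref{4.1}'s style of argument — or more directly by Theorem \ref{2.5} together with Corollary \ref{3.10}\,(v) of Remark \ref{3.13} — we have $\pi_1^{qtop}(X,x_0)\preccurlyeq\pi_1^{gcov}(X,x_0)$ (this is \cite[Proposition 3.24]{A20}, valid since $X$ is connected and locally path connected). Since $\pi^{gc}(X,x_0)$ has finite index, the subgroup topology $\pi_1^{gcov}(X,x_0)=\pi_1^{\pi^{gc}(X,x_0)}(X,x_0)$ has only finitely many open sets (there are finitely many subgroups between $\pi^{gc}(X,x_0)$ and $\pi_1(X,x_0)$, each with finitely many cosets), hence is in particular second countable; as $\pi_1^{qtop}(X,x_0)$ is coarser, it too has only finitely many open sets and is second countable.

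The remaining point — that $\pi_1^{qtop}(X,x_0)$ is a \emph{topological group} under the finite-index hypothesis — is where I expect the real work, and I would deduce it from Theorem \ref{4.1} once I verify that finite index of $\pi^{gc}(X,x_0)$ forces $X$ to be semilocally $\pi^{gc}(X,x_0)$-connected. Here is the idea: $\pi_1^{gcov}(X,x_0)=\pi_1^{\pi^{gc}(X,x_0)}(X,x_0)$, and since $\pi^{gc}(X,x_0)$ has finite index it is an open (indeed clopen) subgroup in this topology; pulling back along the coarser map to $\pi_1^{qtop}(X,x_0)$ need not keep it open, but one can instead argue through the whisker topology: by Theorem \ref{3.8}, $X$ is semilocally $H$-connected at $x_0$ iff $\pi_1^H(X,x_0)\preccurlyeq\pi_1^{qtop}(X,x_0)\preccurlyeq\pi_1^{wh}(X,x_0)$, and a finite-index $H$ makes $\pi_1^H$ have the discrete-on-cosets structure that this chain should accommodate — so the main obstacle is checking that finite index of $\pi^{gc}(X,x_0)$ actually yields the semilocal $\pi^{gc}(X,x_0)$-connectedness needed to invoke Theorem \ref{4.1} (equivalently $GCOV(X)=COV(X)$, hence $\pi_1^{gcov}=\pi_1^{Span}$, which is a topological group, whence so is the coarser $\pi_1^{qtop}$). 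If that implication fails in general, the fallback is to cite \cite[Corollary 2.8]{TPM} and the chain $\pi^{gc}(X,x_0)\leq\overline{\pi^{sg}(X,x_0)}$ to run the argument with $\overline{\pi^{sg}(X,x_0)}$ in place of $\pi^{gc}(X,x_0)$, obtaining that $\pi_1^{qtop}(X,x_0)$ is indiscrete on finitely many cosets and therefore a (finite, hence topological) group.
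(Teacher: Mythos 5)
Your treatment of the first assertion follows the paper's route (reduce to $\pi_1^{H}(X,x_0)$ with $H=\pi^{gc}(X,x_0)$ via Theorem \ref{3.4} and count cosets), but one counting step is wrong: a subgroup $K$ with $H\subseteq K$ does correspond to a subset of $\pi_1(X,x_0)/H$, yet a countably infinite set has $2^{\aleph_0}$ subsets, so you may not conclude that there are only countably many such $K$ (there really can be uncountably many intermediate subgroups when the quotient is, say, an infinite elementary abelian group). The repair is simpler than your argument: $H$ itself belongs to $\Sigma^{H}$, hence $H$ is open, every basic open set $gK$ with $H\subseteq K$ is a union of left cosets of $H$, and each coset $gH$ is itself basic open; so $\{gH \mid g\in\pi_1(X,x_0)\}$ is already a countable basis. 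This is exactly the observation the paper uses ("every open set is a union of left cosets of $H$"). Your caveat that the theorem's phrase "countably many open sets" should really be read as "second countable" when the index is countably infinite is well taken.

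The genuine gap is in the topological-group claim, and neither of your two proposed routes closes it. Finite index of $\pi^{gc}(X,x_0)$ is a purely algebraic hypothesis, and nothing in your sketch (or in the paper) shows that it forces the local condition of semilocal $\pi^{gc}(X,x_0)$-connectedness, so Theorem \ref{4.1} cannot be invoked. Your fallback via \cite[Corollary 2.8]{TPM} assumes that $\pi_1^{qtop}(X,x_0)$ is connected, which is not among the hypotheses here; and the parenthetical "finite, hence topological group" is not a valid inference, since a quasitopological group whose topology has only finitely many open sets is not automatically a topological group without further argument. The paper's actual argument is different and short: from $\pi_1^{qtop}(X,x_0)\preccurlyeq\pi_1^{gcov}(X,x_0)$ (valid as $X$ is connected and locally path connected, \cite[Proposition 3.24]{A20}) and the finiteness of $[\pi_1(X,x_0):\pi^{gc}(X,x_0)]$, the space $\pi_1^{qtop}(X,x_0)$ has finitely many open sets and is therefore compact; Ellis' theorem (as cited from \cite{Ar}) then upgrades the compact quasitopological group $\pi_1^{qtop}(X,x_0)$ to a topological group. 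This compactness-plus-Ellis step is the missing idea in your proposal.
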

\begin{proof}
By Theorem \ref{3.4} $\pi_1^{gcov}(X,x_0)=\pi_1^{H}(X,x_0)$, where $H=\pi^{gc}(X,x_0)$. It is easy to see that every open set in $\pi_1^{H}(X,x_0)$ is a union of left cosets of $H$. Therefore, since $[\pi_1(X,x_0):\pi^{gc}(X,x_0)]$ is countable, $\pi_1^{H}(X,x_0)$ has countably many open sets and so $\pi_1^{gcov}(X,x_0)$ is second countable. 

For the second part, since $X$ is connected, locally path connected by \cite[Proposition 3.24]{A20} we have $\pi_1^{qtop}(X,x_0) \preccurlyeq \pi_1^{gcov}(X,x_0)$. Since $[\pi_1(X,x_0):\pi^{gc}(X,x_0)]$ is finite, by a similar argument of the first part 
$\pi_1^{gcov}(X,x_0)$ has finitely many open sets and so is  $\pi_1^{qtop}(X,x_0)$. Therefore $\pi_1^{qtop}(X,x_0)$ is compact and so by Ellis' Theorem (see 
\cite[p. ix]{Ar}) $\pi_1^{qtop}(X,x_0)$ is a second countable topological group. 
\end{proof}

The following result is a consequence of Theorem \ref{2.3}.
\begin{theorem}\label{4.4}
Let $H$ be a subgroup of $\pi_1(X,x_0)$. Then $\pi_1^H(X,x_0)$ is a topological group if and only if $H$ is a normal subgroup of $\pi_1(X,x_0)$.
\end{theorem}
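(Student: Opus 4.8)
The plan is to derive Theorem \ref{4.4} directly from Theorem \ref{2.3} by applying the latter to the group $G=\pi_1(X,x_0)$ together with the neighbourhood family $\Sigma=\Sigma^H$. The key observation that makes this immediate is that $\Sigma^H=\{K\leqslant \pi_1(X,x_0)\ |\ H\subseteq K\}$ has infinitesimal subgroup $S_{\Sigma^H}=\bigcap\{K\ |\ K\in\Sigma^H\}=H$, since $H$ itself belongs to $\Sigma^H$ (as $H\subseteq H$) and is contained in every member of $\Sigma^H$ by definition. In particular the hypothesis $S_\Sigma\in\Sigma$ required by Theorem \ref{2.3} is automatically satisfied here, with $S_{\Sigma^H}=H\in\Sigma^H$.

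Given this, I would argue as follows. By construction $\pi_1^H(X,x_0)$ is precisely the group $\pi_1(X,x_0)$ equipped with the subgroup topology $G^{\Sigma^H}$ determined by $\Sigma^H$. Applying Theorem \ref{2.3} with $G=\pi_1(X,x_0)$ and $\Sigma=\Sigma^H$, and using $S_{\Sigma^H}=H\in\Sigma^H$, we conclude that $G^{\Sigma^H}=\pi_1^H(X,x_0)$ is a topological group if and only if $S_{\Sigma^H}=H$ is a normal subgroup of $\pi_1(X,x_0)$. This is exactly the claimed equivalence, so the proof closes in one line once the identification $S_{\Sigma^H}=H$ is recorded.

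There is essentially no obstacle here: Theorem \ref{4.4} is a special case of the ``In particular'' clause of Theorem \ref{2.3}, and indeed the excerpt already flags it as ``a consequence of Theorem \ref{2.3}.'' The only point that needs a sentence of justification is that $H\in\Sigma^H$, i.e.\ that the hypothesis of Theorem \ref{2.3} holds for this family; everything else is a direct citation. If one wanted a self-contained argument instead of invoking Theorem \ref{2.3}, one would reprove it in this setting: since $H\in\Sigma^H$, the cosets $\{gH\ |\ g\in G\}$ form a basis for $\pi_1^H(X,x_0)$; if $H$ is normal then $gH=Hg$ makes every right translation continuous, so $\pi_1^H(X,x_0)$ is a topological group by \cite[Proposition 2.1]{A20}; conversely, if $\pi_1^H(X,x_0)$ is a topological group then $Hx^{-1}=r_x^{-1}(H)$ is open, forcing $x^{-1}H=gH\subseteq Hx^{-1}$ for a suitable basic coset containing $x^{-1}$, hence $x^{-1}Hx\subseteq H$ for all $x$, so $H$ is normal. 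Either way the statement follows.
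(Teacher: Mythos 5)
Your proposal is correct and matches the paper exactly: the paper presents Theorem \ref{4.4} with no separate proof, stating only that it is a consequence of Theorem \ref{2.3}, which is precisely your argument via the identification $S_{\Sigma^H}=H\in\Sigma^H$. Your added verification that the hypothesis $S_\Sigma\in\Sigma$ holds is the one detail the paper leaves implicit, and it is correct.
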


\begin{corollary}\label{4.5}
$(i)$ If $\pi_1(X,x_0)$ is a Dedekind group, then $\pi_1^H(X,x_0)$ is a topological group, for every subgroup $H$ of $\pi_1(X,x_0)$.\\
$(ii)$ $\pi_1^{\Pi^{sp}(X,x_0)}(X,x_0)$, $\pi_1^{\pi^{sp}(X,x_0)}(X,x_0)$ and $\pi_1^{\pi^{sg}(X,x_0)}(X,x_0)$ are topological groups.
\end{corollary}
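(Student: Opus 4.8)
The plan is to deduce both parts directly from Theorem \ref{4.4}, which characterizes when $\pi_1^H(X,x_0)$ is a topological group purely in terms of normality of $H$ in $\pi_1(X,x_0)$. So the entire task reduces to verifying that the relevant subgroups are normal.

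For part $(i)$, if $\pi_1(X,x_0)$ is a Dedekind group, then by definition every subgroup of $\pi_1(X,x_0)$ is normal; in particular any given $H \leqslant \pi_1(X,x_0)$ is normal, and Theorem \ref{4.4} immediately gives that $\pi_1^H(X,x_0)$ is a topological group. For part $(ii)$, I would invoke the facts already recorded in the excerpt: the thick Spanier subgroup $\Pi^{sp}(X,x_0)$ is a normal subgroup of $\pi_1(X,x_0)$ (noted in the description of the thick Spanier topology, since each $\Pi^{sp}(\mathcal{U},x_0)$ is normal and the intersection of normal subgroups is normal); the Spanier group $\pi^{sp}(X,x_0)$ is normal, being the intersection of the normal Spanier subgroups $\pi(\mathcal{U},x_0)$; and $\pi^{sg}(X,x_0)$, the subgroup of small generated loops, is normal in $\pi_1(X,x_0)$ (this is part of the standard theory of these subgroups, e.g.\ via \cite{V,TPM}, and is implicit in Remark \ref{3.13}(ii) where every open set of $\pi_1^{qtop}$ is a union of cosets of $\pi^{sg}(X,x_0)$, forcing normality together with homogeneity). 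Applying Theorem \ref{4.4} to each of these three normal subgroups yields that $\pi_1^{\Pi^{sp}(X,x_0)}(X,x_0)$, $\pi_1^{\pi^{sp}(X,x_0)}(X,x_0)$ and $\pi_1^{\pi^{sg}(X,x_0)}(X,x_0)$ are topological groups.

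There is essentially no obstacle here: the corollary is a bookkeeping consequence of Theorem \ref{4.4} once the normality of the named subgroups is acknowledged. The only point requiring a word of care is that normality of $\pi^{sg}(X,x_0)$, $\pi^{sp}(X,x_0)$ and $\Pi^{sp}(X,x_0)$ should be cited rather than reproved, since these are established in the references and in the introductory discussion; I would simply remark that each of these subgroups is normal in $\pi_1(X,x_0)$ and then appeal to Theorem \ref{4.4}.
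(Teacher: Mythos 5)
Your proposal is correct and matches the paper's intended argument exactly: both parts follow from Theorem \ref{4.4} once one notes that every subgroup of a Dedekind group is normal and that $\Pi^{sp}(X,x_0)$, $\pi^{sp}(X,x_0)$ and $\pi^{sg}(X,x_0)$ are normal in $\pi_1(X,x_0)$, facts the paper itself records earlier in Sections 1 and 4. No gaps.
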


\section*{Reference}

 \end{document}